\documentclass[11pt,a4paper,reqno]{amsart} 


\usepackage{color,latexsym,amsfonts,amssymb,amsmath,cite, amsthm}
\usepackage{hyperref}
\usepackage{fullpage}
\usepackage{graphicx}
\usepackage{tikz}
\usepackage{dsfont}
\usepackage{nicefrac,enumerate}



\providecommand{\R}{\mathbb{R}}

\providecommand{\D}{\mathbb{D}}

\renewcommand{\L}{\mathcal{L}}

\providecommand{\Z}{{\mathbb{Z}}}
\providecommand{\Zn}{\Z_{\ge0}}
\providecommand{\Q}{\mathbb{Q}}

\renewcommand{\P}{\mathbb{P}}

\providecommand{\G}{\mathcal{G}}
\providecommand{\Grak}{G^{\ms{rc}}}

\providecommand{\E}{\mathbb{E}}

\providecommand{\C}{\mc{C}}
\renewcommand{\nicefrac}[2]{#1/#2}
\providecommand{\M}{\mathcal{M}}

\providecommand{\T}{\mathbb{T}}

\renewcommand{\d}{{\textup{d}}}
\providecommand{\res}{\mathord{\upharpoonright}}
\providecommand{\deg}{\mathsf{deg}}

\providecommand{\Pt}{\mc{P}_{\theta}}

\providecommand{\es}{\emptyset}
\providecommand{\one}{1}

\providecommand{\mc}{\mathcal}

\providecommand{\Gs}{\mc{G}^*}


\providecommand{\Sd}{D_m}


\providecommand{\g}{\gamma}

\providecommand{\e}{\varepsilon}
\providecommand{\la}{\lambda}

\providecommand{\ms}{\mathsf}


\newcommand{\lgood}{\mathsf{lgood}}

\newcommand{\nf}{\nicefrac}

\newtheorem{theorem}{Theorem}[section]
\newtheorem{corollary}[theorem]{Corollary}
\newtheorem{conjecture}[theorem]{Conjecture}

\newtheorem{lemma}[theorem]{Lemma}
\newtheorem{proposition}[theorem]{Proposition}
\theoremstyle{definition}

\newtheorem{remark}[theorem]{Remark}

\keywords{distances, preferential attachment, Poisson point process, large deviation principle}
\subjclass[2010]{60K35; 60F10; 82C22}
\date{\today}

\begin{document}

\author{Christian Hirsch}
\address[Christian Hirsch]{Department of Mathematical Sciences, Aalborg University, Skjernvej 4, 9220 Aalborg \O, Denmark}
\email{christian@math.aau.dk}

\author{Christian M\"onch}
\address[Christian M\"onch]{Fachbereich Mathematik, Technische Universit\"at Darmstadt, Schlossgartenstr. 7, 64289 Darmstadt, Germany.}
\email{moench@mathematik.tu-darmstadt.de}

\title{Distances and large deviations in the spatial preferential attachment model}

\thanks{CH is supported by The Danish Council for Independent Research | Natural Sciences, grant DFF -- 7014-00074 \emph{Statistics for point processes in space and beyond}, and by the \emph{Centre for Stochastic Geometry and Advanced Bioimaging}, funded by grant 8721 from the Villum Foundation.}

\begin{abstract}
We investigate two asymptotic properties of a spatial preferential-attachment model introduced by E.~Jacob and P.~M\"orters ~\cite{jacMor3}. First, in a regime of strong linear reinforcement, we show that typical distances are at most of doubly-logarithmic order. Second, we derive a large deviation principle for the empirical neighbourhood structure and express the rate function as solution to an entropy minimisation problem in the space of stationary marked point processes. 
\end{abstract}

\maketitle
\section{Introduction}
\label{introSec}

Network scientists discovered that many real-world networks appear to be \emph{scale-free} \cite{albert2002statistical}, i.e. their node degrees approximately follow a power law. Consequently, scale-free network models have attracted a lot of attention from researchers in the last two decades. The monograph \cite{van2016random} provides an excellent introduction into the subject from a mathematical point of view. In their landmark paper \cite{BabA99}, Barab\'asi and Albert observed that scale-free networks emerge naturally from simple local rules based on a reinforcement scheme known as \emph{preferential attachment (PA)}. However, being locally tree-like, classical PA networks fail to reflect the clustering effects common in many real-world networks \cite{dereich2013random,berger2014asymptotic}.\\
\begin{figure}[!htpb]
	\input{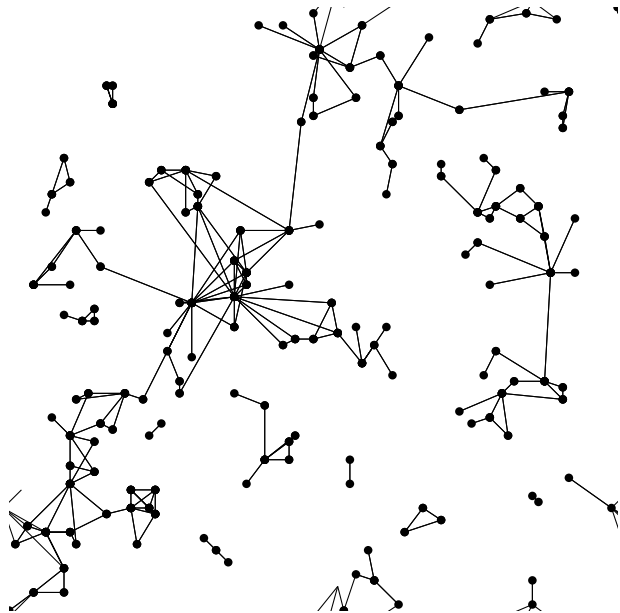}
	\caption{Illustration of the S-PAM from \cite{jacMor1} on the two-dimensional torus.}
	\label{fig}
\end{figure}

An elegant approach to incorporate clustering into PA models is to use geometry. We embed the network nodes into Euclidean or hyperbolic space and make the PA mechanism aware of spatial distances. Thus, nearby nodes are likely to be connected, thereby inducing clustering \cite{jacMor1}. In this sense, \emph{spatial preferential attachment models (S-PAMs)} combine the virtues of PA models and classical geometric random graphs \cite{penrose}, which exhibit strong local clustering but are not scale-free.

The literature offers a variety of definitions and results for spatial PA models \cite{jacMor1, jacMor2, jacMor3, jacMor4, aiello2008spatial,janssen2013geometric,jordan2010degree,jordan2013geometric, flaxman1,flaxman2, pra1,pra2}. In the present paper, we investigate typical distances and large deviation principles (LDPs) in the model introduced in \cite{jacMor3}, as illustrated in Figure \ref{fig}. Our methods are fairly robust and we believe that in essence our analysis could be transferred to many of the other architectures mentioned above. 
\\

Firstly, we verify an intriguing conjecture in \cite[Remark 3]{jacMor2} about typical distances in the largest connected component of the S-PAM. As made precise in Theorem \ref{distanceThm} below, the asymptotic behaviour depends on both the strength of the preferential attachment as well as on the influence of vertex distances on the connection probability. This is in contrast to the situation for the asymptotic degree distribution. Indeed, here only the strength of the preferential attachment but not the geometry affects the power-law exponent \cite[Remark 1]{jacMor1}.

There are a number of small-world results for complex networks endowed with a geometric structure already available \cite{sfPerc,ultraSmall,komj,komj2,flaxman2}. Our work complements these earlier investigations naturally by providing a distance result in a spatial model with preferential attachment in the ultra-small regime of doubly-logarithmic typical distances. Our proof shows that, in this regime, the geometry of the underlying space has an influence on the length of shortest paths in the graph. However, the geometry does not change their fundamental architecture as it can already be observed in non-geometric PA models \cite{DvdH10,DMM12,caravenna2016diameter}.\\
Secondly, we establish an LDP for the in-degree evolution and the evolution of the neighbourhood structure of a typical vertex. The most fundamental building blocks for this result are the process-level LDP of a marked Poisson point process \cite{georgii2} and the contraction principle. The rate function is expressed via a constraint minimisation problem of the specific relative entropy. Loosely speaking, large deviations from the neighbourhood structure are induced by stationary modifications of the original Poisson point process of vertices. Asymptotically, the configurations in a rare event minimise the specific-entropy costs of these modifications subject to the constraint of leading to the considered rare event. Hence, our approach offers a novel complementing perspective to previously used martingale techniques \cite{choi, subLinPA}.

In a broader context, our investigation embeds into the current stream of research on large deviations in random graphs, which has already resulted in a variety of deep and beautiful mathematical results. We refer the reader to \cite{chattSurv} for an excellent survey on the breakthroughs for Erd\H{o}s-R\'enyi graphs. For PA models, large deviations for the degree sequence are available both in the regime of linear as well as sub-linear reinforcement \cite{choi, subLinPA}. In general, in the spatial setting, geometric constraints often give rise to results that are surprisingly different from purely combinatorial models \cite{harel}.\\

The rest of the paper is organised as follows. In Section \ref{modelSec}, we provide precise definitions of the S-PAM and state our two main results. Sections \ref{distSec} and \ref{ldpSec} contain the proofs for the distance asymptotics and the large deviation principle, and in Section \ref{conclusion} we discuss extensions of our results and open problems. Finally, the appendix collects technical auxiliary results that have already appeared in a related form in \cite{jacMor2}.

\newpage
\section{Model definition and main results}
\label{modelSec}

\subsection{Definition}
\label{defSec}
We consider the S-PAM from~\cite{jacMor1, jacMor2, jacMor3, jacMor4}. More precisely, the ambient space $\T_n = [-\nicefrac{n^{1/d}}2,\nicefrac{n^{1/d}}2]^d/{\sim}$ of the model is the torus of side length $n > 0$ in dimension $d\ge1$. For the construction of the S-PAM, we view the process of network nodes as a space-time process of points arriving sequentially in $\T_n$. More precisely, they form a homogeneous Poisson point process $X = X_n$ on $\T_n \times [0, 1]$ with intensity $1$. Formally, a point $(x,s) \in X$ is a \emph{vertex} at \emph{position} $x \in \T_n$ and \emph{birth time} $s \in [0,1]$. For brevity, we often write just $x \in X$ to denote the a.s.~unique vertex $(x,s)$ in position $x\in \T_n$.

 We identify $X$ with the vertex set of a random geometric graph $G_n = (X,E)$ obtained by the following distance-dependent preferential attachment mechanism.

The model is parametrised by an affine function $f:\, \Zn \to (0,\infty)$, $z \mapsto \g z + \g'$ inducing the PA mechanism, where $\g\in(0,1), \g' > 0$, and a decreasing {profile function} $\varphi:\, [0, \infty) \to [0,1]$ incorporating the spatial effects. We assume power decay of the profile function in the sense that $\varphi(x) = \min\{\kappa x^{-\delta}, 1\}$ for some $\delta > 1$ with normalising constant $\kappa$ chosen such that $\int_0^\infty \varphi(x) \d x = 1/2$.
With these settings, the edges in the S-PAM are obtained as follows, where by a slight abuse of notation we write $|\cdot - \cdot|$ for the Euclidean distance on the torus. Initially, the edge set is empty. Whenever a new vertex $(y, t) \in X$ is born, it connects to each vertex $(x, s)\in X$ with $s<t$ independently with probability 
\begin{equation}\label{eq:connprob}
\varphi\Big(\frac{t |x - y|^d}{f(Z_x(t-))}\Big),
\end{equation}
where $Z_x(t-)$ denotes the \emph{in-degree} of $(x,s)$ at time $t-$, i.e.~the number of connections it has already received from vertices born during $(s,t)$. These dynamics give rise to an increasing process $(G_n(t))_{t\in[0,1]}$ of geometric graphs. We usually write $G_n$ for $G_n(1)$.

We conclude our definition of the S-PAM with a few remarks regarding the difference of our setup to the definition given in~\cite{jacMor1,jacMor2}. It is shown in~\cite[Section 4.1]{jacMor1} that the model therein, say $(\tilde G_n)_{n \ge 0}$, is invariant under a particular space-time rescaling, mapping $\tilde G_n$ to our $G_n$. $G_n$ is then used throughout most of the proofs in~\cite{jacMor1,jacMor2}. Since our arguments also use exclusively the rescaled version of their graph, our definition refers directly to the image $G_n$ of $\tilde G_n$ under rescaling. However, it is important to note that the nature of the rescaling is static, i.e.~we first have to build the graph and then rescale it; the rescaling \emph{does not} bijectively map the temporal graph process $(\tilde G_n)_{n \ge 0}$ to the spatial process $(G_n)_{n \ge 1}$ which is obtained in the obvious way from our model by coupling the corresponding Poisson processes.

\subsection{Typical distances}
One of the central findings in \cite{jacMor2} is that the S-PAM undergoes a phase transition: if the attachment function $f$ increases quickly and the profile function $\varphi$ decreases slowly, then the connected component $C_n$ of the oldest vertex in $G_n = G_n(1)$ grows linearly in $n$. Moreover, this component is \emph{robust} under site percolation in the sense that it remains of linear size even after any nontrivial iid Bernoulli thinning of the vertices. If the attachment is weak or the profile decays rapidly, then all connected components grow sub-linearly in $n$.

We focus on the robust phase. More precisely, by \cite[Theorem 1]{jacMor2} and \cite[Theorem 7]{jacMor1}, for $\gamma > \nicefrac\delta{(1+\delta)}$ we have
\begin{equation} \label{eq:giant}
\P-\lim_{n\to\infty}\frac{\#C_n}n=\theta\in(0,1),
\end{equation}
where $\P-\lim$ is shorthand for limit in probability. In this regime it is conjectured \cite[Remark 3]{jacMor2}, that typical distances are of doubly-logarithmic order. We verify this conjecture here. The graph distance in $G_n$ is denoted by $\textup{dist}_n(\cdot,\cdot)$.
 
\begin{theorem}[Distances for $\gamma > \nicefrac\delta{(1+\delta)}$]
    \label{distanceThm}
Let ${Y},{Y'}$ be uniformly chosen vertices of $C_n$. Then, with high probability as $n\to\infty$,
	\[
\mathsf{dist}_n({Y},{Y'}) \le \big(4 + o(1)\big)\frac{\log\log n}{\log \frac\gamma{\delta(1 - \gamma)}}.
\]
\end{theorem}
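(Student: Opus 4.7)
The plan is to implement a greedy ``climbing-the-hierarchy'' argument, familiar from the ultra-small-world theory of preferential-attachment graphs, adapted here to the spatial setting. Set $\rho := \gamma/(\delta(1-\gamma))$, so that $\rho > 1$ precisely in the robust regime $\gamma > \delta/(1+\delta)$, and for $k\ge1$ define the \emph{layers} $L_k := \{v \in X : Z_v(1) \ge k\}$. By the in-degree power law for the S-PAM, the spatial density of $L_k$ on $\T_n$ is of order $k^{-1/\gamma}$, and a vertex with $Z_v(1) \asymp k$ is typically born at time $s_v \asymp k^{-1/\gamma}$. As a preliminary anchoring step, I would show that the uniformly chosen vertex $Y \in C_n$ lies within $O(1)$ edges of some $v_0 \in L_{k_0}$ for $k_0$ a sufficiently large constant; since $\#C_n/n \to \theta > 0$ by \eqref{eq:giant}, a local-weak-limit-type argument on a ball of fixed radius around $Y$ yields such a $v_0$, and this bounded contribution is absorbed into the $o(\log\log n)$ correction.

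The crux is the following \emph{one-step estimate}: for $k_0 \le k \le n^{\gamma-\eps}$ and uniformly in $v \in L_k$, with high probability there is some $w \in L_{k^{\rho-\eps}}$ at graph distance at most $2$ from $v$. The natural two-edge configuration is a common \emph{child} $x$ of $v$ and $w$, and the first-moment count of such configurations takes the schematic form
\[
  \int_{\T_n} \mathrm{d} y_w \int_0^{s_v} \mathrm{d} s_w \int_{s_v}^1 \mathrm{d} t \int_{\T_n} \mathrm{d} y_x \, \mathbf{1}\{Z_w(1) \ge k^{\rho-\eps}\}\, \varphi\!\Big(\tfrac{t|y_x - x_v|^d}{f(Z_v(t-))}\Big)\, \varphi\!\Big(\tfrac{t|y_x - y_w|^d}{f(Z_w(t-))}\Big).
\]
Using the tail behaviour $\varphi(u) \sim \kappa u^{-\delta}$ together with the growth law $Z_u(t) \asymp (t/s_u)^\gamma$, one finds that this quantity diverges as $k \to \infty$ precisely when $\rho < \gamma/(\delta(1-\gamma))$: the exponent $\delta$ enters through the long-range part of the profile, while the factor $\gamma/(1-\gamma)$ recovers the non-spatial PA contribution. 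A complementary variance estimate, using the Mecke formula for the underlying marked Poisson process $X$, then promotes this first-moment bound to an event of high probability, uniformly in $v \in L_k$.

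Iterating the one-step estimate $\ell$ times starting from $v_0 \in L_{k_0}$ produces a path of length $2\ell$ to a vertex of in-degree at least $k_0^{(\rho-\eps)^\ell}$. Choosing $\ell$ minimally so that $(\rho-\eps)^\ell \log k_0 \ge (\gamma-\eps)\log n$ gives
\[
  \ell \;=\; (1+o(1))\,\frac{\log\log n}{\log(\gamma/(\delta(1-\gamma)))}.
\]
Applying this construction from both $Y$ and $Y'$, each endpoint reaches a hub of in-degree at least $n^{\gamma-\eps}$ within $(2+o(1))\log\log n/\log\rho$ edges; a separate first-moment calculation shows that any two such hubs share many common children and are therefore at graph distance at most $2$ from each other. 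Summing these contributions yields the claimed bound $\mathsf{dist}_n(Y,Y') \le (4+o(1))\log\log n/\log(\gamma/(\delta(1-\gamma)))$.

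The main obstacle is to upgrade the one-step estimate to the uniform-in-$v$, high-probability form needed for the iteration. The first-moment heuristic that pins down the threshold $\rho = \gamma/(\delta(1-\gamma))$ is short, but the variance bound is delicate because candidate two-hop paths at different $v \in L_k$ share the same underlying Poisson structure, and truncation is needed at both the short-range scale $|y-x_v|^d \lesssim f(k)/t$ and the long-range scale captured by the power tail of $\varphi$, which contribute comparably to the relevant integrals. A secondary concern is ensuring that hubs of in-degree at least $n^{\gamma-\eps}$ exist in sufficient numbers and form a constant-diameter subgraph, as required for the final joining step.
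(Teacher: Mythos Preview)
Your outline captures the right heuristic---the two-hop climb with exponent $\rho=\gamma/(\delta(1-\gamma))$, the anchoring via local weak convergence, and the final joining of hubs---and this is indeed the skeleton of the paper's argument. But the proposal has a real gap at the point you yourself flag as the ``main obstacle'': the uniform-in-$v$ one-step estimate cannot be obtained by a second-moment computation. At the bottom of the hierarchy, where $k=k_0$ is a fixed constant, there are order $n$ vertices in $L_{k_0}$, so a statement that holds simultaneously for all of them would require individual failure probability $o(1/n)$; yet with $k_0$ bounded the best you can hope for is a failure probability bounded by a constant less than one. If instead you settle for a pointwise estimate and try to iterate, you face the conditioning problem: the vertex $v_1$ you land on after the first step is a complicated functional of the Poisson process, so the second application of your estimate is no longer over a fresh configuration. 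A Mecke/variance bound controls the count of successful $v$'s, not the specific $v$ determined by the exploration.

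The paper resolves this with a device you do not mention: an independent two-colouring (sprinkling). Each vertex is black with probability $b$ close to $1$ and red otherwise; monotonicity gives $G^b_n\cup G^r_n\subset G_n$. The anchoring (your first step) is done entirely in $G^b_n$, producing an old vertex $Y_0$ measurable with respect to the black process. The climb is then carried out using only red vertices as connectors, so each step sees genuinely fresh, independent randomness conditional on what came before. The paper also splits the climb into two regimes: from $Y_0$ (birth time $\le s$, a constant) to a vertex of birth time $(\log n)^{-C}$ takes $O(\log\log\log n)$ steps and is proved only for the single $Y_0$, not uniformly; from there down to the oldest vertex takes the claimed $\sim 2\log\log n/\log\alpha$ steps, and \emph{here} a union bound does work, because the per-vertex failure probability is now $\exp(-(\log n)^{\alpha})=o(1/n)$. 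A further technical point: the paper parametrises layers by birth time and works with a \emph{locally good} notion (high red in-degree coming from a cube of volume $s^{-1}$ around the vertex), which keeps each exploration step spatially localised; your parametrisation via $Z_v(1)$ is a global functional and would make the independence bookkeeping harder even with sprinkling.
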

In the limit $\delta \to 1$, the influence of the geometry of $\T_n$ vanishes and we recover the scaling of typical distances in non-spatial PA models, cf. \cite{DMM12}. The precise value $\log(\gamma/(\delta(1 - \gamma)))$ of the factor in front of $\log\log n$ results from a quick back-of-an-envelop calculation.

As in the standard PA model, vertices born at time $s$ typically have degree $s^{-\gamma}$ for small $s$. In particular, a vertex born at time $n^{-a}$ connects to about $n^{a\gamma}$ vertices, each connecting to the oldest vertex born at time $n^{-1}$ with probability $(n/n^\gamma)^{-\delta} = n^{\delta (\gamma - 1)}$. Here, we find the difference to the standard PAM, where such connections occur with probability of order $n^{\gamma - 1}$. 
Hence, we can expect to find a connection if $a \approx \nf{\delta(1 - \gamma)}{\gamma}$. A more refined spatial analysis reveals that in general vertices born at time $n^{-a^{k+1}}$ connect to vertices born at time $n^{-a^k} = \exp(-\exp( k \log(a)+\log \log n))$ via two edges. Hence, the number of iterations needed to connect a vertex of bounded degree to the oldest vertex should be of the order $-\nf{\log\log n}{\log a}$.

\subsection{Large deviations principle}
\providecommand{\neighb}{\ms{neighb}}

As our second main result, we derive an LDP for the evolution of the neighbourhood structure in the S-PAM as defined in Section~\ref{defSec}, in the vein of~\cite{caputo}. This complements results for combinatorial sparse graphs discussed in~\cite{caputo} by a class of random graphs with an underlying geometry. In particular, as a corollary we obtain an LDP for the evolution of the empirical in-degree distribution over time, similar to the setting in~\cite{binBalls, choi}. 

To state the LDP precisely, we introduce notation related to local convergence of graphs. We let $\Gs$ denote the family of \emph{rooted graphs}, i.e.~of locally finite and connected graphs with a distinguished vertex. We write $g_h$ for the subgraph of a rooted graph $g \in \Gs$ obtained as the union of all paths in $g$ connecting to the root in at most $h\ge0$ hops. We equip $\Gs$ with the \emph{local topology}, the topology generated by the functions $\ms{ev}_{h, g'}:\, g \mapsto \one\{g_h \simeq g'_h\}$, where $h \in \Z_{\ge0}$ and $g' \in \Gs$. Then, writing $[G_n(t), x]$ for the spatial PAM at time $t$ with distinguished vertex $x \in X$, the \emph{evolution of the empirical neighbourhood structure} 
\begin{align}
	\label{empNeighbEq}
	L_n^{\neighb}(\cdot) = \frac1n \sum_{x \in X} \delta_{[G_n(\cdot), x]}
\end{align}
defines a random variable in the product space $\M(\Gs)^{[0, 1]}$, where $\M(\Gs)$ is the family of finite measures on $\Gs$ endowed with the vague topology. In other words, $\M(\Gs)^{[0,1]}$ carries the smallest topology such that for each $t \in [0,1]$, $h \in \Z_{\ge0}$ and rooted graph $g\in\Gs$ the evaluation maps
\begin{align*}
	\ms{ev}_{t, h, g}:\, \M(\Gs)^{[0,1]} &\to [0,\infty) \\
	\nu &\mapsto \nu_t(g_h)
\end{align*}
are continuous. Also note that since we work in a Poisson setting, in~\eqref{empNeighbEq} we normalise by the window size $n$ rather than the random number of vertices.

%
%
By applying the contraction principle~\cite[Theorem 4.2.10]{dz98}, the LDP for $L_n^{\neighb}$ becomes a consequence of the LDP for marked Poisson point processes~\cite[Theorem 3.1]{georgii2}. Hence, we introduce common notation in this setting. First, to realise the independent connections with the probability described in~\eqref{eq:connprob}, we proceed as in the random connection model~\cite{netCom} and introduce a family of auxiliary random variables. More precisely, we augment each vertex $(x, s) \in X$ independently with a collection of iid random variables $\{V_{x, y}\}_{y \in X}$  such that each $V_{x,y} \sim {\bf U}([0,1])$ is uniformly distributed on $[0,1]$. In other words, $X$ becomes an $[0,1]^{\Zn}$-marked Poisson point process.  
As a new vertex $(y, t) \in X$ arrives, it connects to $(x, s)\in X$ if and only if $s < t$  and $V_{x, y}$ is smaller than the threshold given in~\eqref{eq:connprob}.

%
%
In the limit $n \to \infty$ the torus $\T_n$ approaches $\R^d$. Therefore, the limiting objects appearing in the LDP live in $\Pt$, the space of all distributions of stationary $[0, 1]^{\Zn}$-marked point processes on $\R^d$ endowed with the $\tau_{\mc L}$-topology of local convergence. This topology is generated by the evaluations $\ms{ev}_f: \Pt \to [0, \infty)$,  $\Q \mapsto \int_{\C} f(\psi) \Q(\d \psi)$, where $\C$ is the space of configurations in the space $\R^d \times [0,1]^{\Zn}$ that are locally finite in the first component and $f$ is any nonnegative measurable function depending only on the configuration in a bounded domain~\cite{georgii2}. Additionally, $\Q^*$ denotes the unnormalised \emph{Palm version} of a stationary point process $\Q \in \Pt$ \cite[Section 9]{poisBook}. That is, $\Q^*$ is determined by the disintegration identity
$$\int_{\C} f(\psi) \Q^*(\d \psi) = \int_{\C} \int_{[0, 1]^d} f(\theta_x \psi) \psi(\d x) \Q(\d \psi),$$
where $\theta_x: \R^d \to \R^d$, $y \mapsto y - x$ denotes the shift by $x \in \R^d$. Then, for each time $t\in[0,1]$, considering the neighbourhood structure at the origin $o \in \R^d$ under the Palm measure $\Q^*$ yields an element in $\M(\Gs)$. Hence, letting $t$ vary, we associate to $\Q \in \Pt$ the evolution of the neighbourhood structure $\Q^{*, \neighb} \in \M(\Gs)^{[0, 1]}$.

Finally, the rate function in the LDP is expressed in terms of the \emph{specific relative entropy} of stationary marked point processes with respect to the marked Poisson point process. More precisely, writing $\res_{[-n/2, n/2]^d}$ for the restriction to the box $[-n/2, n/2]^d$, for $\Q \in \Pt$ we put
$$H(\Q) = \lim_{n \to \infty} n^{-d} \int \log\Big(\frac{\d \Q{\res_{[-n/2, n/2]^d}}}{\d \ms{Pois}\res_{[-n/2, n/2]^d}}(\psi)\Big) \d \Q\res_{[-n/2, n/2]^d}(\d \psi),$$
tacitly  applying the convention that $H(\Q) = \infty$ if the Radon-Nikodym derivative of the restricted point processes does not exist. By means of the contraction principle, the LDP for marked Poisson point processes~\cite[Theorem 3.1]{georgii2} now gives rise to the LDP for the evolution of the empirical neighbourhood structure.

\begin{theorem}
    \label{ldp2Thm}
    The empirical neighbourhood structure $\{L_n^{\neighb}\}_{n \ge 1}$ satisfies the LDP in the product space $\M(\Gs)^{[0, 1]}$ with good rate function
    $$\nu \mapsto \inf_{\substack{\Q \in \mc P_\theta \\ \Q^{*, \neighb} = \nu}} H(\Q).$$
\end{theorem}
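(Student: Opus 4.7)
The plan is to obtain Theorem~\ref{ldp2Thm} via the contraction principle, using the process-level LDP for the underlying marked Poisson point process as the source. First I would encode each point of $X$ together with its birth time and the full family $\{V_{x,y}\}_{y \in X}$ of auxiliary uniforms realising the independent connection attempts in~\eqref{eq:connprob}, making the vertex process a marked Poisson point process on $\T_n$ with mark space $[0,1]^{\Zn}$. The Georgii--Zessin theorem \cite[Theorem 3.1]{georgii2} then provides an LDP in $\Pt$, with respect to the $\tau_{\mc L}$-topology and good rate function $H$, for the stationary empirical field
$$R_n \;=\; \frac{1}{n}\int_{\T_n} \delta_{\theta_x \psi_n}\,\d x.$$
Setting $F(\Q) := \Q^{*,\neighb}$, the disintegration identity defining the unnormalised Palm measure $\Q^*$, applied pointwise in $t$, identifies $L_n^{\neighb}$ with $F(R_n)$ up to vanishing boundary corrections arising from the torus wrap-around.

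The core task is to verify that $F$ is continuous on each sub-level set $\{H \le c\}$, so that the (extended) contraction principle \cite[Theorem 4.2.10]{dz98} applies. Continuity of the Palm map $\Q \mapsto \Q^*$ in the local topology is classical; the genuine obstacle lies in the neighbourhood functional $\psi \mapsto [G(\psi,t), o]_h$, because the connection probability~\eqref{eq:connprob} depends on the in-degree $Z_x(t-)$, which is a priori a global functional of $\psi$. I would address this by introducing a truncation radius $R>0$ and replacing $G$ by the graph $G^R$ in which all edge-attempts of Euclidean length greater than $R$ are suppressed. The truncated map is a bona fide local observable in the sense of $\tau_{\mc L}$ and hence continuous.

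The key estimate then is that the $h$-hop neighbourhood of the origin in $G^R$ coincides with that in $G$ with probability tending to $1$ as $R \to \infty$, uniformly in $\Q$ on the level set. This is where the argument exploits the polynomial decay $\varphi(r) \le \kappa r^{-\delta}$ with $\delta > 1$ together with the power-law in-degree tail developed in~\cite{jacMor1,jacMor2}; the uniformity in $\Q$ is ensured by a standard entropy inequality, controlling the probability of rare local events under $\Q$ by $H(\Q) \le c$ plus the corresponding Poisson probability. Passing to the limit $R \to \infty$ then transfers continuity from $F^R$ to $F$ on $\{H \le c\}$.

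Once continuity has been established, the contraction principle yields the LDP for $L_n^{\neighb}$ in $\M(\Gs)^{[0,1]}$ with good rate function $\nu \mapsto \inf\{H(\Q):\,\Q \in \Pt,\,F(\Q)=\nu\}$; goodness is inherited from that of $H$. The main technical obstacle throughout is the localisation step, since long-range edges in a preferential attachment mechanism can in principle attach to distant high-degree hubs, and ruling out their influence on bounded neighbourhoods uniformly over all $\Q$ of bounded specific entropy requires the careful interplay between the profile decay and the degree-distribution tails sketched above.
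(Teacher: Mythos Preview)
Your overall architecture---encode the model as a $[0,1]^{\Zn}$-marked Poisson process, invoke the Georgii LDP, truncate edges of length exceeding some $R$, and then remove the truncation---matches the paper exactly. The discrepancy lies in how the truncation is removed and, more importantly, in the mechanism that actually controls long edges.

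The paper does not try to establish continuity of $F$ on $\{H\le c\}$ directly via an entropy inequality. Instead it proves that the truncated variables $L_n^{R-\neighb}(t,h)(g_h)$ are an \emph{exponentially good approximation} of $L_n^{\neighb}(t,h)(g_h)$ under the \emph{Poisson} law, and then recovers the uniformity over $\{H\le\alpha\}$ indirectly: since the truncated differences themselves satisfy an LDP (by the same contraction argument), the exponential approximation bound forces $|\Q^{*,R-\neighb}(t,h)(g_h)-\Q^{*,R'-\neighb}(t,h)(g_h)|\le\varepsilon$ for all $\Q$ with $H(\Q)\le\alpha$ once $R,R'$ are large. Your plan to use a ``standard entropy inequality'' directly is delicate here, because the event that the $h$-neighbourhood in $G$ differs from that in $G^R$ is not a local event of the configuration; the in-degree $Z_x(t-)$ in~\eqref{eq:connprob} depends on the full point process.

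The more substantive gap is the mechanism you invoke to control long edges. You propose to combine the profile decay with the ``power-law in-degree tail developed in \cite{jacMor1,jacMor2}''. Those tail bounds are proved under the Poisson law and there is no obvious way to make them uniform over all stationary $\Q$ with $H(\Q)\le c$; an adversarial $\Q$ of bounded entropy could produce much heavier degree tails. The paper sidesteps this entirely by observing that once the test graph $g_h$ is fixed, the event $\{[G_n(t),o]_h\simeq g_h\}$ \emph{forces} every relevant in-degree to be at most $\ell=\#g_h$. Hence any edge that could spoil the comparison must satisfy $V_{x',y'}\le\varphi\big(\sigma|x'-y'|^d/f(\ell)\big)$ for a deterministic $\ell$, which reduces the question to counting long edges in a fixed-profile random connection model. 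That a-priori degree bound is the key idea, and it is what allows the exponential-approximation estimate to be carried out purely at the Poisson level with elementary moment bounds (early vertices, dense boxes, long RCM edges), without ever appealing to degree-tail asymptotics.
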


Since the degree of the root in a rooted graph is nothing more than the size of the 1-neighbourhood, after another application of the contraction principle, Theorem~\ref{ldp2Thm} yields an LDP for the \emph{evolution of empirical in-degrees} 
\begin{align}
    \label{degEvEq}
    L_n^{\mathsf{deg}}(\cdot) = \frac1n \sum_{k \ge 0}\#\{x \in X:\, Z_x(\cdot) = k\} \delta_k.
\end{align}

\begin{corollary}
    \label{ldpThm}
    The empirical in-degree evolution $\{L_n^{\mathsf{deg}}\}_{n \ge 1}$ satisfies the LDP in the product space $\M(\Zn)^{[0, 1]}$ with good rate function
    $$\nu \mapsto \inf_{\substack{\Q \in \mc P_\theta \\ \Q^{*, \mathsf{deg}} = \nu}} H(\Q).$$
\end{corollary}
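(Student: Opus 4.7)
The corollary is obtained by a second application of the contraction principle, this time to Theorem~\ref{ldp2Thm}. The plan is to exhibit a continuous map
\begin{equation*}
\pi \colon \M(\Gs)^{[0, 1]} \to \M(\Zn)^{[0, 1]}
\end{equation*}
satisfying the pathwise identity $L_n^{\mathsf{deg}} = \pi(L_n^{\neighb})$ for every $n$, as well as the Palm-level identity $\Q^{*, \mathsf{deg}} = \pi(\Q^{*, \neighb})$ for every $\Q \in \Pt$. Both the LDP and the stated form of the rate function then drop out.

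To construct $\pi$, I would use that each rooted graph $g \in \Gs$ arising from the S-PAM retains birth-time marks from the underlying space-time process, so the in-degree $\deg_{\ms{in}}(g)$ of the root, i.e.\ the number of its neighbours with strictly later birth time, is well defined. Set
\begin{equation*}
(\pi(\nu))_t(\{k\}) := \nu_t\bigl(\{g \in \Gs :\, \deg_{\ms{in}}(g) = k\}\bigr), \qquad t \in [0, 1],\ k \in \Zn.
\end{equation*}
The identity $L_n^{\mathsf{deg}} = \pi(L_n^{\neighb})$ is then a direct bookkeeping consequence of the definitions~\eqref{empNeighbEq} and~\eqref{degEvEq}, while the Palm analogue is immediate from the disintegration formula defining $\Q^*$.

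Continuity of $\pi$ is the one non-trivial step. Since the in-degree of the root is determined by its 1-neighbourhood together with the birth-time marks, the set $\{g:\, \deg_{\ms{in}}(g) = k\}$ decomposes as a disjoint countable union of basic clopen sets of the form $\{g:\, g_1 \simeq g'\}$ with $g'$ ranging over the at most countably many rooted 1-balls whose root has in-degree $k$. Each such basic set is evaluated by one of the generating functionals $\ms{ev}_{t, 1, g'}$, and the total mass of $\nu_t$ provides a uniform dominating bound for the resulting countable sum. This yields continuity of the evaluations $\nu \mapsto (\pi(\nu))_t(\{k\})$ generating the product topology on $\M(\Zn)^{[0, 1]}$, hence of $\pi$.

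Granted continuity of $\pi$, the contraction principle \cite[Theorem 4.2.10]{dz98} applied to Theorem~\ref{ldp2Thm} produces the LDP for $\{L_n^{\mathsf{deg}}\}_{n\ge 1}$ with good rate function
\begin{equation*}
\mu \mapsto \inf_{\nu:\, \pi(\nu) = \mu}\ \inf_{\substack{\Q \in \Pt \\ \Q^{*, \neighb} = \nu}} H(\Q) \;=\; \inf_{\substack{\Q \in \Pt \\ \Q^{*, \mathsf{deg}} = \mu}} H(\Q),
\end{equation*}
where the second equality uses $\pi(\Q^{*, \neighb}) = \Q^{*, \mathsf{deg}}$, and goodness of the rate function is preserved by continuity of $\pi$. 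The sole anticipated obstacle is the continuity check; everything else is an immediate unfolding of definitions and a direct invocation of the contraction principle.
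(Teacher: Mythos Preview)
Your approach is the paper's: the corollary is deduced from Theorem~\ref{ldp2Thm} by a single further application of the contraction principle, the paper's entire argument being the sentence preceding the statement (``the degree of the root in a rooted graph is nothing more than the size of the 1-neighbourhood''). You have simply unpacked that sentence.

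One small point worth tightening: as defined in the paper, $\Gs$ consists of unmarked rooted graphs, so your appeal to retained birth-time marks steps outside the stated setup, and with continuous time-marks the ``at most countably many rooted 1-balls'' claim would actually fail. The clean way to read both your argument and the paper's sentence is to take the rooted graphs as directed (edges oriented from the younger to the older endpoint); then the in-degree of the root is genuinely a functional of the unmarked $1$-neighbourhood, there are only countably many isomorphism types of $1$-balls, and your decomposition-plus-total-mass continuity argument goes through. With that reading the proposal and the paper coincide.
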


Since $\M(\Zn)^{[0, 1]}$ carries the product topology, Corollary~\ref{ldpThm} only provides access to crude information on the time evolution of the empirical degree distributions. For this reason, we next deduce a more refined LDP based on the Skorohod topology \cite{FeKu06}. Since this topology requires an underlying metric space, we consider only the setting of a priori bounded in-degrees. Hence, we replace the $\M(\Zn)$ by a suitable Euclidean space. More precisely, for $k \ge 0$ let
$$L_n^{\mathsf{deg}; \le k}(\cdot) = \Big(\frac1n \#\{x\in X:\, Z_x(\cdot) = 0\}, \ldots, \frac1n \#\{x\in X:\, Z_x(\cdot) = k\}\Big)$$
denote the evolution of the $(k+1)$-dimensional vector containing the normalised in-degree evolutions truncated at the $k$th in-degree. We consider $L_n^{\mathsf{deg}; \le k}$ as a random element of the Skorohod space $\D_{k+1}$ of functions $f:\,[0, 1] \to [0, \infty)^{k+1}$ that are c\`adl\`ag in each coordinate.

\begin{corollary}
    \label{ldpCor}
	For every $k \ge 0$ the truncated empirical in-degree evolution $\{L_n^{\mathsf{deg}}\}_{n \ge 1}$ satisfies the LDP in the Skorohod topology with good rate function
  $$\mathbf f = (f_0(\cdot), \ldots, f_ k(\cdot)) \mapsto \inf_{\substack{\Q \in \mc P_\theta \\ \Q^{*, \deg; \le k} = \mathbf f}} H(\Q).$$
\end{corollary}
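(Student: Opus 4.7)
My plan is to deduce this refinement from Corollary~\ref{ldpThm} by combining the contraction principle with an exponential-tightness argument in the Skorohod topology. First, the coordinate-evaluation map $\nu \mapsto (\nu(\{0\}),\ldots,\nu(\{k\}))$ is continuous from $\M(\Zn)$ (vague topology) to $[0,\infty)^{k+1}$. Lifting pointwise in time yields a continuous map from $\M(\Zn)^{[0,1]}$ to $\D_{k+1}$, both equipped with the product (pointwise-in-$t$) topology. Invoking the contraction principle on the LDP of Corollary~\ref{ldpThm} therefore produces the LDP for $\{L_n^{\mathsf{deg};\le k}\}_{n\ge 1}$ in the product topology on $\D_{k+1}$, with rate function of the required form.

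Second, I would upgrade from the product topology to the Skorohod topology by establishing exponential tightness in the latter. The paths $t\mapsto L_n^{\mathsf{deg};\le k}(t)$ are step functions whose jumps, each of size $1/n$ in at most two coordinates, occur precisely at edge-addition events of $G_n$. A coordinatewise uniform bound $L_n^{\mathsf{deg};\le k}(t)\in [0,2]^{k+1}$ holds up to probability $e^{-\Omega(n)}$ by Poisson concentration of the vertex count in $\T_n$. To control the Skorohod modulus $w'$, I would bound the number $N_{[a,b]}$ of edges added in a subinterval $[a,b]\subset [0,1]$ by a Chernoff-type argument: the substitution $y' = y f(Z_x)^{-1/d}$ shows that, conditional on the history, the intensity of edges into $x$ scales as $f(Z_x(t-))$, and combined with the integrability of $\varphi$ and moment bounds on the S-PAM in-degrees $Z_x(t)$ (available in \cite{jacMor1, jacMor2}), this yields $\E[N_{[a,b]}] = O(n(b-a))$ together with the requisite exponential tails. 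Discretizing $[0,1]$ at scale $\delta$ then gives $\P(w'(L_n^{\mathsf{deg};\le k}, \delta) > \eta) \le e^{-Mn}$ for sufficiently small $\delta$, which is exponential tightness in $\D_{k+1}$.

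Finally, the standard principle that an LDP in a weaker topology combined with exponential tightness in a finer topology yields an LDP in the finer topology with the same (still good) rate function completes the proof. The main obstacle is precisely the exponential concentration of $N_{[a,b]}$: because preferential attachment can favour a small number of high-degree vertices, a naive Poisson domination does not suffice and one must use moment control on $Z_x(t)$ of the kind already developed for the distance estimates in Section~\ref{distSec}. Once this concentration is in hand, the remaining steps are standard.
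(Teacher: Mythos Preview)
Your overall architecture matches the paper's: deduce the LDP in the product topology from Corollary~\ref{ldpThm} via contraction, then upgrade to the Skorohod topology by establishing exponential tightness there (the paper invokes \cite[Theorem~4.1]{FeKu06}, and by \cite[Corollary~4.2.6]{dz98} this suffices). The pointwise exponential tightness you mention is indeed immediate from Poisson concentration of $\#X$.

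The divergence is in how you control the Skorohod modulus. You propose to bound the \emph{total} number $N_{[a,b]}$ of edges created in a short time window, arguing that a Chernoff-type estimate together with moment bounds on the in-degrees $Z_x(t)$ gives exponential tails. This is both harder than necessary and not justified by the references you cite: Section~\ref{distSec} develops \emph{lower} bounds on degrees (local goodness), not upper moment bounds, and since in-degrees in the S-PAM have power-law tails, exponential moments of individual $Z_x(t)$ are infinite. Exponential concentration for the total edge count is therefore a genuine obstacle, aggravated by the self-reinforcing nature of the attachment rule.

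The paper sidesteps this entirely by exploiting the truncation at level $k$: a jump in $L_n^{\deg;\le k}$ can only be caused by an edge landing on a vertex whose current in-degree is at most $k$, so the relevant preferential-attachment factor is uniformly bounded by $f(k)$. Conditionally on the past, such edges are dominated by edges in a random connection model with the integrable profile $\varphi_*(\rho)=\varphi(\sigma\rho/f(k))$, and the exponential-moment machinery of Lemmas~\ref{denseLem} and~\ref{finExpmomLem} then applies directly. The paper also performs a two-regime split that you omit: intervals contained in $[0,2\sigma]$ are handled separately by Lemma~\ref{earlyLem} (few early arrivals, hence at most $k$ degree changes per vertex), because the factor $t$ in~\eqref{eq:connprob} degenerates near $t=0$ and the random-connection comparison is only valid for $t\ge\sigma$. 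Incorporating the bound $f(Z_x(t-))\le f(k)$ for the relevant vertices and this case distinction would close the gap in your argument.
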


\section{Proof of Theorem \ref{distanceThm}}
\label{distSec}

We prove Theorem \ref{distanceThm} in several steps. First, in Section \ref{sec:global} we explain the overall idea and give the proof subject to intermediate results. Then, Section \ref{sec:rob} introduces sprinkling and monotonicity as central tools for the arguments in the subsequent sections. Finally, Sections \ref{sec:loc} and \ref{sec:layersB} contain the proofs of the intermediate results.
\subsection{The main argument}\label{sec:global}
To establish the upper bound on typical distances, we first show that almost all vertices in the giant component are within bounded distance of a fairly old vertex of high degree. Then, we proceed to show that each such high-degree node is at distance at most $2 (\rho + o(1)) \log\log n$ of the oldest vertex in $G_n$ with high probability, where
\begin{equation}\label{def:prefactor}
	\rho = \frac1{\log(\nf\gamma{(\delta(1 - \gamma))}}.
\end{equation}
In essence, this argument is already outlined in \cite{jacMor2}, cf.~Remark 3, and can be traced in the proofs of Propositions 13 and 15 therein, see also the brief heuristics given after the statement of Theorem~\ref{distanceThm}.
However, the arguments given in \cite{jacMor2} to establish the existence of a giant component only require the oldest vertex to connect to sufficiently many lower degree vertices. To show this, only a bounded number of search steps are necessary. To prove Theorem \ref{distanceThm} along similar lines, we analyse the probabilities of adverse events occurring during the search for connecting vertices more thoroughly than is required for the robustness results in \cite{jacMor2}. To keep the different stages of our search algorithm sufficiently independent we rely on a sprinkling construction in the vein of \cite{jacMor2}. More precisely, for some small $r>0$ we colour each vertex in $X$ independently \emph{red} with probability $r$ and \emph{black} with probability $b=1-r$. Then, $G^r_n$ and $G^b_n$ denote the S-PAMs constructed on the red and black vertices, respectively.
The reasoning behind this will be explained in Section~\ref{sec:rob}.\\
\begin{remark}
As is made precise in Lemma \ref{lem:monotonicity} below, the S-PAM satisfies a strong super-additivity principle induced by the reinforcement effect of PA. Let $X_1$ and $X_2$ denote two Poisson processes on $\T_n \times [0,1]$ of positive intensity and let $G(X)$ denote the S-PAM built from $X$. If both profile and PA rule are monotone, then the construction of both S-PAMs can be coupled such that almost surely
\[G(X_1)\cup G(X_2) \subset G(X_1 + X_2).\]
\end{remark}

Let us make the overall argument precise. To start the construction, we need to find an old black vertex near a uniformly chosen vertex $Y \in C_n$. By stationarity, we may assume that $Y = (o,U)$ is located at the origin $o \in \R^d$ with $U$ uniform in $[0,1]$ and consider the Poisson process $X$ under the corresponding Palm distribution $\P_{(o,U)}$. A vertex $(x,s) \in G^b_n$ is \emph{$D$-reachable} if it connects to $(o, U)$ by a path in $G^b_n$ in at most $D$ hops.  For ease of reference, we introduce the events
\[E^b_n(D, s) = \{\text{some vertex $Y_0\in G^b_n$ born before time $s$ is $D$-reachable} \}.\]
If there are several reachable vertices, $Y_0$ denotes the one with minimal birth time.

\begin{proposition}[Connection to good vertices]
	\label{prop:local}
	Let $b, s > 0$. Then, there exists an almost surely finite random variable $D = D^b(s)$
\[
	\lim_{n \to \infty}\P_{(o,U)}\big(\{(o,U) \in C^b_n\}\, \setminus\, E^b_n(D^b(s), s)\big) = 0,
\]
where $C^b_n$ denotes the connected component of the oldest vertex in $G^b_n$.
\end{proposition}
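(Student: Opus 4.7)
My plan is to prove the proposition by constructing, conditionally on $(o,U) \in C_n^b$, a greedy chain of black vertices $v_0 = (o,U), v_1, v_2, \ldots$ of strictly decreasing birth times $t_0 > t_1 > \cdots$, where $v_{k+1}$ is selected from the $G_n^b$-neighborhood of $v_k$, and stopping as soon as the current birth time drops below $s$. The quantity $D^b(s)$ then bounds the length of this chain, and the proposition reduces to showing that such a bound, independent of $n$, is attained with probability tending to one.

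The key technical step is a per-step birth-time contraction: there exist $\alpha > 1$ and $p_0 > 0$ depending only on $b, \gamma, \delta$ such that, for any black vertex $v$ of birth time $t$ lying in $C_n^b$, conditional on the exploration revealed so far, $v$ has a $G_n^b$-neighbor born before $t^{\alpha}$ with probability at least $p_0$. This matches the heuristic given after the statement of Theorem~\ref{distanceThm}: a vertex of birth time $t$ accrues $\asymp t^{-\gamma}$ in-edges by time $1$, and \eqref{eq:connprob} lets these in-edges reach back to birth times as small as $\sim t^{\gamma/(\delta(1-\gamma))}$ in the robust regime $\gamma > \delta/(1+\delta)$. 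The quantitative version is obtained by integrating the connection kernel \eqref{eq:connprob} against the Poisson intensity over the relevant birth-time slab and spatial neighborhood, in parallel with the out-reach estimates in the robustness proofs of \cite{jacMor2}; the black thinning only rescales intensities by constant factors and so preserves the estimate. Iterating $t \mapsto t^{\alpha}$ from $t_0 \le 1$ down to $s$ requires at most $J = O(\log\log(1/s))$ steps, giving the deterministic bound $D^b(s)$.

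The main obstacle is the joint handling of independence between steps and of the conditioning on $(o,U) \in C_n^b$. For independence, I would expose the graph in rounds, revealing at round $k$ only the edges incident to $v_k$, exploiting the conditional independence of edges in \eqref{eq:connprob} given positions, marks and prior in-degrees. The conditioning on the giant can be absorbed by observing that any vertex born before some fixed $s' \in (s, 1)$ has degree diverging with $n$ and so belongs to $C_n^b$ with probability tending to one (an immediate consequence of the giant component construction of \cite{jacMor2}); hence once the descent passes below $s'$, the inductive hypothesis $v_k \in C_n^b$ is automatically satisfied. Finally, to avoid a single unlucky step derailing the chain, I would amplify the descent by probing many candidate in-neighbors of $v_k$ at each step, converting the per-trial probability $p_0$ into near-certain progress via $O(1/p_0)$ quasi-independent trials. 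Implementing this amplification while preserving sufficient independence between trials and respecting the conditioning on $C_n^b$ is where I anticipate the most delicate bookkeeping.
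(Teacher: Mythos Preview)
Your proposal has a genuine gap at its core. The per-step contraction ``a black vertex $v$ of birth time $t$ lying in $C_n^b$ has a $G_n^b$-neighbour born before $t^\alpha$ with probability at least $p_0$'' is not justified and in fact fails. Membership in $C_n^b$ is a \emph{global} event and supplies no local information: a late-born vertex $v$ can lie in $C_n^b$ solely through an edge to a single \emph{younger} in-neighbour, with no out-neighbour whatsoever, so your greedy chain stalls at the first step. Your amplification via in-neighbours does not rescue this, since the starting vertex $(o,U)$ with $U$ close to $1$ has only $O(1)$ expected neighbours of either kind --- there is nothing to amplify. Relatedly, the claim that ``any vertex born before some fixed $s'\in(s,1)$ has degree diverging with $n$'' is simply false: for fixed $s'$ the degree converges in law to a finite random variable as $n\to\infty$ (this is precisely the content of the local limit), so such a vertex is not automatically in $C_n^b$ and cannot be used to decouple the conditioning.

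The paper's proof sidesteps all of this by a soft argument. It passes to the local weak limit $H_\infty$ of $G_n^b$ established in \cite{jacMor2}. On the event $\{(o,U)\in K_\infty\}$, the unique infinite cluster $K_\infty$ contains vertices born before $s$, and local finiteness of $H_\infty$ guarantees that shortest paths to finitely many such vertices have a.s.\ finite (random) length $D$ and lie in a bounded box $(-R,R)^d$. Local convergence then transfers the bound back to $G_n^b$. The crucial point is that the conditioning on the giant component is exactly what supplies the \emph{existence} of a path to an old vertex; one cannot construct that path greedily, step by step, without appealing to the limit structure.
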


Once we have reached a sufficiently old black vertex, we proceed as in the above heuristic argument. 
For the remainder of this section, $g: (0, \infty) \to (0, \infty)$ denotes the sub-polynomially growing function introduced in Lemma \ref{lem:oldgood}, which is parametrised by $\gamma,\delta$ and $r$ only. 
Extending a notion from \cite{jacMor2}, we say a vertex $(x,s) \in G_n$ is \emph{$r$-good} if $s < \nf12$ and it has at least $\nf{s^{-\gamma}}{g(s^{-1})}$ red neighbours with birth times in $(s,\nf{1}{2})$. It is \emph{locally $r$-good} if it remains $r$-good after removing all edges of the form $y \to x$ with $y \notin [x - s^{-\nf1d}, x + s^{-\nf1d}]^d.$ 
Loosely speaking, exploring possible paths along good vertices offers the advantage that we have a sufficient number of outgoing connections to choose from. Additionally, local goodness allows us to scan $X$ for good vertices while keeping the explored areas sufficiently localised to leverage on the spatial independence of Poisson points.

We build up a hierarchical connection path along $r$-good vertices of increasing age joined by young red vertices born after time $\nicefrac12$. Writing $\mathsf{rgood}_n \subset X$ for the subset of all red $r$-good vertices, we introduce a hierarchy of layers 
\[L^r_1 \subset L^r_2 \subset \dots \subset \mathsf{rgood}_n\] 
of red $r$-good vertices, parametrised by their age. The first layer $L^r_1$ contains the vertices of highest degree, i.e.~near $n^\gamma$. With increasing index, the layers $\{L^r_i\}_{i \ge 1}$ contain more and more vertices of lower and lower degrees. More precisely, in the robust regime $\gamma > \nf{\delta}{(1 + \delta)}$, we can fix global parameters
\begin{align}
	\label{abEq}
\alpha \in\Big(1,\frac\gamma{\delta(1 - \gamma)}\Big), \; \beta \in \Big(\alpha, \frac\gamma\delta + \alpha\gamma\Big),
\end{align}
and then set 
\[L^r_k = \left\{(x,s) \in\mathsf{rgood}_n:\, s \le n^{-{\alpha^{-k}}}\right\} \]
and \[
	K = \min\left\{k \ge 1:\, n^{-{\alpha^{-k}}} \le (\log n)^{-\nu^{-1}}\right\}-1,
\]
where \[
\nu = \min\left\{-\beta\delta+\gamma-\alpha\gamma\delta,\frac{\beta-\alpha}{d}\right\}>0.
\]
Starting from an old $r$-good vertex, we typically reach $L^r_K$ in at most $C(\alpha,\beta,r)\log\log\log n$ steps, where $C(\alpha,\beta,r)$ is a sufficiently large constant. In particular, in Section \ref{sec:layersB}, we explicitly specify the scheme for establishing these connections. For the moment, assume that $C(\alpha,\beta,r)$ is given and call an $r$-good vertex at distance at most $C(\alpha,\beta,r)\log\log\log n$ from $L^r_K$ \emph{well-connected}.
\begin{proposition}[Well-connectedness]
	\label{prop:layersB}
	Let $b > 0$. Then,
	\[\lim_{s \to 0} \liminf_{n \to \infty}\E[\P_{Y_0}\big(Y_0 \text{ is well-connected}\,\big|\, G^b_n \big)\one\{ E^b_n( D^b(s), s)\}] = 1.\]
\end{proposition}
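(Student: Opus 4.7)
The plan is to inductively build a path from $Y_0$ to $L^r_K$ whose vertices have rapidly decreasing birth times, linked by $2$-hop steps via young red neighbours (born in $(\cdot,\nicefrac12)$). If at iteration $i$ we land on an $r$-good vertex of birth time at most $s_i = s^{\alpha^i}$, then after $i = (\log\log\log n)/\log\alpha + O(1)$ iterations we have $s_i \le n^{-\alpha^{-K}}$, which is the defining threshold of $L^r_K$. This will both explain the $\log\log\log n$ rate and fix the constant $C(\alpha,\beta,r)$.

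First I would verify that $Y_0$ itself is $r$-good with high probability under the conditioning on $E^b_n(D^b(s),s)$. Since $Y_0$ is born before the fixed time $s$, the standard PA in-degree concentration (cf.~\cite{jacMor1}), together with the monotonicity principle recalled above, yields an in-degree of order $s^{-\gamma}$ by time $\nicefrac12$, a fraction $r+o(1)$ of which are red; for $s$ small enough this count dominates $s^{-\gamma}/g(s^{-1})$.

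Next comes the inductive step: given a locally $r$-good vertex $v$ of birth time $s_v$, produce a $2$-hop path $v \to u \to w$ to a locally $r$-good vertex $w$ with birth time at most $s_v^{\alpha}$. The intermediate vertex $u$ is picked from the $\ge s_v^{-\gamma}/g(s_v^{-1})$ young red in-neighbours of $v$ guaranteed by $r$-goodness. By \eqref{eq:connprob}, each $u$ connects to a candidate $w$ of birth time $\le s_v^{\alpha}$ within the spatial box of side $s_v^{-\beta/d}$ around $v$ with probability at least a positive power of $s_v^{-1}$; since there are $\Theta(s_v^{-\beta}\cdot s_v^{\alpha})$ such candidates, each of typical degree $\Theta(s_v^{-\alpha\gamma})$, a Chernoff bound produces the desired $w$ with probability $1-\exp(-s_v^{-\nu})$, the exponent $\nu$ being positive precisely because of the constraints~\eqref{abEq}. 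To keep the $O(\log\log\log n)$ iterations independent, I would employ sprinkling: further split the red vertices into independent sub-classes (one per scale), and exploit local goodness to confine each exploration to disjoint boxes of side $s_i^{-1/d}$.

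The main obstacle is the calibration in the inductive step: the constraints on $(\alpha,\beta)$ in \eqref{abEq} are precisely what simultaneously secure (i) enough candidate vertices $w$ in the spatial box (via $\beta>\alpha$), (ii) adequate connection probability against the power-law profile (via $\alpha<\gamma/(\delta(1-\gamma))$), and (iii) a strictly positive Chernoff exponent $\nu$. This is a quantitative sharpening of the existence-of-giant-component arguments in~\cite[Propositions 13 and 15]{jacMor2}, which needed only a bounded number of iterations; here the iterative scheme must be controlled uniformly in $n$, and the birth-time doubling $s_i = s_{i-1}^\alpha$ is responsible for the $\log\log\log n$ factor.
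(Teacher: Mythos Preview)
Your overall strategy matches the paper's: iteratively build a chain of locally $r$-good vertices $(x_0,t_0)=Y_0,(x_1,t_1),\ldots$ with $t_j\le t_{j-1}^\alpha$, each consecutive pair joined by a $2$-hop connection, bound the failure probability at step $j$ by a stretched exponential in $t_{j-1}^{-\nu}$, and sum. The $O(\log\log\log n)$ count of iterations and the role of~\eqref{abEq} in making $\nu>0$ are exactly as you say.

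The mechanics of each iteration, however, differ from the paper's in two respects. First, the paper decouples each step into (a) \emph{finding} a locally $r$-good target $(x_j,t_j)$ with $t_j\le t_{j-1}^\alpha$ in the box of volume $t_{j-1}^{-\beta}$ around $x_{j-1}$ (Lemma~\ref{lem:gooddense}, contributing the term $q^{\frac17 t_{j-1}^{-(\beta-\alpha)/d}}$), and then (b) \emph{$2$-connecting} $x_{j-1}$ and $x_j$ via a \emph{fresh} red vertex born in $[\nicefrac12,1]$ (Lemma~\ref{lem:2conn1}, contributing $\exp(-cr\Psi^r(x_{j-1},t_{j-1}))$). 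The connector is not one of the in-neighbours certified by $r$-goodness; those in-neighbours serve only to guarantee that $Z^r_{x_{j-1}}(\nicefrac12)$ is large, which is what makes $x_{j-1}$ attractive to connectors arriving \emph{after} time $\nicefrac12$. Your scheme of reusing an in-neighbour $u$ born before $\nicefrac12$ as the connector would require controlling the connection to $w$ against $w$'s evolving degree and under the conditioning that $u$ already hit $v$; this could be made to work but is messier than the clean Poisson-thinning of Lemma~\ref{APP1}. Second, the paper does \emph{not} secure independence between iterations by further sub-colouring the red points. Instead, step~(a) is stated conditionally on the spatial filtration $\mathcal{F}(x_{j-1}^-)$ and always explores a fresh region to the right in the first coordinate, while step~(b) is stated conditionally on all of $X^r\cap(\T_n\times[0,\nicefrac12])$ and uses only the independent Poisson process on $[\nicefrac12,1]$; a union bound over $j$ then suffices. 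Your proposed sub-colouring would use an $n$-dependent number of colours, hence vanishing intensity per colour, which is both unnecessary and a nuisance for the constants in Lemmas~\ref{lem:gooddense} and~\ref{lem:2conn1}.
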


Having established a path from $(o,U)$ to $L^r_K$, the last step is to bound the diameter of $L^r_K$.
\begin{proposition}[Final layer diameter]
	\label{prop:layersC}
	With high probability, we have that 
	$$\mathsf{diam}_n(L^r_K) \le 4 K.$$
 \end{proposition}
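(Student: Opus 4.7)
The plan is to reduce the diameter bound for $L^r_K$ to a telescoping argument through the hierarchy $L^r_1 \subset \cdots \subset L^r_K$. Fixing a reference vertex $v_\star \in L^r_1$ (say, the oldest red $r$-good vertex), the proposition follows from the triangle inequality once I establish that with high probability
\[
    \mathsf{dist}_n(v, v_\star) \le 2K \qquad \text{for every } v \in L^r_K.
\]
I intend to prove this bound by induction on the smallest $k$ with $v \in L^r_k$: the base case $v \in L^r_1$ should yield $\mathsf{dist}_n(v, v_\star) \le 2$ by a direct two-hop argument among the ancient, high-degree vertices of $L^r_1$ (each pair of such vertices already shares many young common neighbours), and the inductive step reduces the distance by $2$ per layer by showing that every $v \in L^r_{k+1}$ is within graph distance $2$ of some vertex of $L^r_k$.

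For the inductive step, fix $v = (x_v, s_v) \in L^r_{k+1}$, so $s_v \le n^{-\alpha^{-(k+1)}}$. Since $v$ is $r$-good, it has at least $s_v^{-\gamma}/g(s_v^{-1})$ red neighbours $z = (x_z, t_z)$ with $t_z \in (s_v, \nicefrac{1}{2})$, each confined by local goodness to a cube of side $2 s_v^{-1/d}$ around $x_v$. Each such $z$ serves as a potential second hop to a target $w \in L^r_k$ of birth time $u_w \le n^{-\alpha^{-k}}$. Using the edge law \eqref{eq:connprob} together with the typical in-degree $Z_w(t_z) \sim (t_z/u_w)^\gamma$ of $w$ at time $t_z$, the expected number of targets $w \in L^r_k$ within a suitable cube of side $n^{\beta \alpha^{-(k+1)}/d}$ around $x_z$ to which $z$ connects can be computed explicitly; the parameters $\alpha, \beta$ in \eqref{abEq} are tuned so that this expected contribution is at least a positive power of $n$, governed precisely by the positivity of $-\beta\delta + \gamma - \alpha\gamma\delta$ in the definition of $\nu$. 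This yields a per-$z$ success probability bounded below by a positive constant, and a Poisson thinning / Chernoff-type estimate over the $s_v^{-\gamma}/g(s_v^{-1}) \to \infty$ intermediate candidates then produces the desired two-hop connection with super-polynomially small failure probability in $n$, easily absorbing a union bound over the at most $O(n)$ vertices in $L^r_{k+1}$ and the $K = O(\log\log n)$ layers.

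The main obstacle will be preserving enough independence across the successive stages of the induction: the intermediate vertices $z$ and the targets $w \in L^r_k$ are jointly coupled through the preferential-attachment history, so naive conditioning would destroy the Poisson structure used above. Following \cite{jacMor2}, my plan is to use the sprinkling construction of Section~\ref{sec:rob}, reserving an independent Poisson sub-process for each inductive level, so that the $r$-goodness certificate for $v$ and the downward connection via $z$ are produced from disjoint sources of randomness; the $V_{x,y}$-marks supplying the edge decisions are likewise partitioned across levels. Local $r$-goodness further confines each stage to a cube of radius $s_v^{-1/d}$, which keeps the distinct levels spatially well-separated and makes the relevant concentration estimates straightforward to apply on disjoint sub-boxes. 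Combining these ingredients closes the induction and yields $\mathsf{diam}_n(L^r_K) \le 4K$ with the required high probability.
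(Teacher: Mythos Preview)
Your global strategy---connect every $v\in L^r_K$ to a fixed reference vertex in at most $2K$ hops by descending through the layers one $2$-connection at a time, then union-bound over the $O(n)$ vertices and the $K$ layers---is precisely what the paper does. The gap is in how you realise the single $2$-hop $v\leadsto w$. You take the intermediary $z$ from the red neighbours of $v$ supplied by its $r$-goodness certificate, so $t_z\in(s_v,\tfrac12)$, and then try to exhibit an edge $z\to w$ for some $w\in L^r_k$. But the probability of $z\to w$ is governed by $Z_w(t_z-)$, the in-degree of $w$ at the \emph{earlier} time $t_z<\tfrac12$, whereas $r$-goodness of $w$ only lower-bounds $Z^r_w(\tfrac12)$; your appeal to the ``typical'' growth $Z_w(t_z)\sim(t_z/u_w)^\gamma$ is not available uniformly over the candidate $w$'s and $t_z$'s without substantial extra work. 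Moreover the edges $z\to w$ were already sampled when $z$ was born, so they are entangled with the degree histories that produced $v$'s goodness certificate in the first place; your Chernoff-over-the-$z$'s step therefore does not go through as stated, and reserving a separate sprinkled sub-process for each of the $K\sim\log\log n$ levels is not how the two-colour scheme of Section~\ref{sec:rob} is set up.

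The paper sidesteps both issues by reversing the order inside the $2$-hop. It first applies Lemma~\ref{lem:gooddense} to locate a locally $r$-good target $w$ with birth time at most $s_v^\alpha$ in a cube of volume $s_v^{-\beta}$ to the right of $v$; conditional independence here comes from the spatial filtration $\mathcal F(x^-)$, not from extra sprinkling. Only then does it invoke Lemma~\ref{lem:2conn1} to produce a \emph{young} red connector born in $[\tfrac12,1]$ linking $v$ and $w$. Because the connector arrives after time $\tfrac12$, the relevant in-degrees are $Z^r_v(\tfrac12)$ and $Z^r_w(\tfrac12)$, both controlled by $r$-goodness, and the connector comes from an unexplored time slab, giving the clean bound $1-e^{-cr\Psi^r}$. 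The resulting per-step failure probability $2\exp(-q\,s_v^{-\nu})$ then beats the union bound over all $O(n)$ vertices once $s_v\le n^{-\alpha^{-K}}$, which is exactly where the choice of $K$ via the threshold $(\log n)^{-1/\nu}$ enters. The paper also treats separately the very old vertices $s<n^{-1/\beta}$, which $2$-connect directly to the oldest vertex via Lemma~\ref{APP1}.
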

Combining Propositions \ref{prop:local}--\ref{prop:layersC}, we now complete the proof of Theorem \ref{distanceThm}.
\begin{proof}[Proof of Theorem \ref{distanceThm}]
	Since $K$ is of order $(1 + o(1))\log \log n / \log \alpha$, by Proposition \ref{prop:layersC} it suffices to show that a uniformly chosen $Y \in C_n$ connects to $L^r_K$ in at most $o(\log\log n)$ hops with high probability. In particular, this occurs under the event $E \cap F$ where $E = E^b_n(D^b(s), s)$ and $F = \{Y_0 \text{ is well-connected}\}$. In other words, it suffices to show that
	\begin{align}
		\label{cefEq}
		\lim_{b \to 1} \liminf_{s \to 0} \liminf_{n \to \infty} \P_{(o, U)}(\{(o, U) \in C_n^b\} \cap E \cap F) = \theta .
	\end{align}
	To achieve this goal, decompose the left-hand side as
	\begin{align*}
		&\P_{(o, U)}(\{(o, U) \in C_n^b\} \cap E \cap F) \\
		&\qquad\ge \P_{(o, U)}((o, U) \in C_n^b) - \P_{(o, U)}(\{(o, U) \in C_n^b\} \setminus E) - \P_{(o, U)}(E \setminus F).
	\end{align*}
By Proposition \ref{prop:local}, the second summand tends to 0 as $n \to \infty$.
	Since $E$ is measurable with respect to $G^b_n$, the third contribution equals
	$$\P_{(o, U)}(E \setminus F) = \E_{(o, U)}[(1 - \P_{Y_0}(F|G^b_n))\one\{E\}],$$
	which tends to 0 as $s \to \infty$ and $n \to \infty$ by Proposition \ref{prop:layersB}.  Hence, \eqref{cefEq} gives that
	\begin{align*}
		\liminf_{s \to \infty} \liminf_{n \to \infty}\P_{(o, U)}(\{(o, U) \in C_n^b\} \cap E \cap F) \ge \theta^b,
	\end{align*}
	which by continuity of the percolation probability \cite[Proposition 7]{jacMor2} tends to $\theta$ as $b \to 1$. 
\end{proof}
\subsection{Sprinkling and monotonicity}
\label{sec:rob}
In this subsection, we highlight sprinkling and monotonicity as central tools entering the proofs of Propositions \ref{prop:layersB} and \ref{prop:layersC} and in particular we explain which role the colouring plays in the proofs. To explain the idea behind sprinkling, assume we explore two disjoint subgraphs of $G_n$. If we want to show that these subgraphs are connected to each other, we would like to use that they are independent. However, it is sometimes challenging to exclude hidden dependencies in the construction or definition of the subgraphs in question. It would be much easier to sample edges independently between them. This is where the sprinkling technique enters the stage.

Since $G_n$ is built from a Poisson point process, it is easiest to add a few additional points to $X$, which potentially results in additional edges in the S-PAM. It is convenient here to consider again the alternative formulation of the model obtained by first considering the set $X \times X$ of `potential edges' and then assigning the collection of iid weights $V_{X \times X} = \{V_{x, y}\}_{x,y \in X}$ to the potential edges, with $V_{x, y}$ uniform on $[0,1]$ and an edge is added between $(x,s)$ and $(y,t)$ with $s<t$ if and only if $V_{x,y} \le \varphi(t|x-y|^d/f(Z_x(t-)))$.

Formally, we consider, for $b$ close to 1, the independent colouring of the Poisson process $X$ described in the previous section, i.e. each node is either \emph{black} with probability $b$ or \emph{red} with probability $r = 1 - b$. Hence, the thinning theorem for Poisson processes \cite[Corollary 5.9]{poisBook} decomposes $X = X^b \cup X^r$ into a black and an independent red Poisson process with parameters $b$ and $r$, respectively. Recall that $G^b_n$ denotes the S-PA graph built from $X^b$ only, which can be viewed as a site-percolated version of $G_n$ with retention parameter $b$. 

\begin{remark}\label{percneqperc}
The robustness results of \cite{jacMor2} are formulated for this version of site percolation. However, the standard meaning of `site percolation' on $G_n$ requires to \emph{first} construct $G_n$ and \emph{then} remove vertices and incident edges independently with retention probability $b$. However, denoting this classical variant of the percolated graph by $G_{n,b}$, it is easily seen from the monotonicity of the attachment mechanism that we can couple $G_{n,b}$ and $G^b_n$ in such a way that
$
G^b_n \subset G_{n,b},
$
and such that in particular the vertex sets of both graphs coincide.
\end{remark}

By continuity, the S-PAM $G^b_n$ obtained from $X^b$ resembles $G_n$, for $b$ close to 1 and we view the edges sent from a red to a black vertex as a version of sprinkling. 
Now, the following monotonicity principle holds, where for two geometric graphs $G,H$ we write $G \subset H$ if every vertex and every edge of $G$ is also contained in $H$.
\begin{lemma}[Monotonicity]\label{lem:monotonicity}
The graphs $G_n$, $G^b_n$ and $G^r_n$ can be defined on the same probability space in such a way that almost surely
$
G^b_n \cup G^r_n\subset G_n.
$
\end{lemma}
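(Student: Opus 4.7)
The plan is to realise all three graphs on the same probability space by using a common marked Poisson process. Let $X$ be the underlying Poisson process on $\T_n \times [0,1]$ and equip it independently with the iid uniform weights $\{V_{x,y}\}_{x,y \in X}$ on $[0,1]$ introduced just before Theorem~\ref{ldp2Thm}, together with an independent Bernoulli colouring that splits $X = X^b \sqcup X^r$. Then build $G_n$, $G_n^b$ and $G_n^r$ deterministically from these inputs using the rule~\eqref{eq:connprob}: for $s<t$, an edge between $(x,s)$ and $(y,t)$ is present in $G_n$ iff $V_{x,y} \le \varphi(t|x-y|^d/f(Z_x(t-)))$, and analogously in $G_n^b$ (respectively $G_n^r$) restricted to $X^b$ (respectively $X^r$) and using the in-degree process $Z_x^b$ (respectively $Z_x^r$) internal to that sub-graph.

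The key monotonicity is that $f(z)=\gamma z+\gamma'$ is strictly increasing in $z$ and $\varphi$ is non-increasing, so the threshold appearing in~\eqref{eq:connprob} is a non-decreasing function of the recipient's current in-degree. I would then proceed by induction along the chronological order of the points of $X$ (which is a.s.\ totally ordered, since birth times are a.s.\ distinct) and establish the two statements
\[
    Z_x^b(t-)\le Z_x(t-) \quad \text{for all } x\in X^b,\,t\in[0,1],
\]
and the analogous inequality for $x\in X^r$, together with the edge-inclusions $E(G_n^b)\subset E(G_n)$ and $E(G_n^r)\subset E(G_n)$. The inductive step is essentially automatic: when a new black vertex $(y,t)$ arrives, the inductive hypothesis applied to an earlier black $(x,s)$ yields $Z_x^b(t-)\le Z_x(t-)$, so by the monotonicity above, the $G_n^b$-threshold $\varphi(t|x-y|^d/f(Z_x^b(t-)))$ is dominated by the corresponding $G_n$-threshold, whence $V_{x,y}$ falling below the former forces it below the latter; arrivals of red vertices do not affect $G_n^b$ but only increase the in-degrees appearing in $G_n$, preserving the inequality.

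From this one obtains $G_n^b\subset G_n$ and, by the symmetric argument, $G_n^r\subset G_n$. Since the vertex sets satisfy $X^b\cup X^r=X$ by construction of the colouring, the union inclusion $G_n^b\cup G_n^r\subset G_n$ follows. I do not expect a genuine obstacle: the only care needed is to handle the chronological induction rigorously on the countable point set of $X$ and to verify that the joint law of $(G_n, G_n^b, G_n^r)$ constructed this way has the correct marginals, which is immediate from the thinning property of the Poisson process and the fact that conditional on the colouring the weights $\{V_{x,y}\}$ remain iid uniform.
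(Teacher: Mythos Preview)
Your proof is correct and follows essentially the same approach as the paper: realise $X=X^b\cup X^r$ with a common family of edge weights $\{V_{x,y}\}$, then use the monotonicity of $f$ and $\varphi$ to show by chronological induction that $Z_x^b(t-)\le Z_x(t-)$ (and likewise for the red graph), which forces the edge inclusions. Your write-up is in fact more explicit than the paper's, which merely says ``by monotonicity of the attachment and a suitable coupling of the corresponding in-degree evolutions.''
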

\begin{proof}
	First, represent the Poisson process as $X = X^b \cup X^r$, 
	where the latter is an independent superposition of the black Poisson process and  the red Poisson process.
	We sample the edge variables in a consistent manner with the above decomposition. That is,
$ V_{X^b \times X^b} \subset V_{X \times X}$.
Now, we couple $G_n^b, G_n$ via the sequential PA construction. By monotonicity of the attachment and a suitable coupling the corresponding in-degree evolutions, 
$
G_n^b(t) \subset G_n(t)
$
holds for all $t \le 1$.
	It only remains to note that we can also construct $G^r_n$ in a consistent manner such that $G^r_n\subset G_n$. To achieve this, we use an identical copy of $X^r$ and the corresponding restricted weights and just run the construction of $G^r_n$ alongside the construction of $G^b_n, G_n$ above and observe that by monotonicity of the attachment rule, any edge drawn in $G^r_n$ is also drawn in $G_n$. 
\end{proof}

\subsection{Proof of Proposition \ref{prop:local}}\label{sec:loc}
Say that a vertex $(x, s) \in G^b_n$ is $(D,R)$-\emph{reachable} for some $D \ge1$ and $R>0$ if in $G^b_n$ there exists a path $\pi = \big((o,U), (x_1, s_1), (x_2, s_2), \dots, (x,s) \big)$ with vertex positions in $(-R,R)^d$ that connects $(0, U)$ to $(x,s)$ and is of length at most $D$.
We prove a slightly stronger assertion based on the modified reachability events 
\[
E_n^*(R,D,N,s) = \{\text{at least $N$ black vertices born before time $s$ are }(R,D)\text{-reachable} \}.
\]

\begin{proof}[Proof of Proposition \ref{prop:local}]
	In \cite{jacMor2}, it is shown that as $n \to \infty$, the finite graphs $G^b_n$ converge weakly to a local limit graph $H_\infty$ and $\theta^b \in(0,1)$ in \eqref{eq:giant} becomes the proportion of vertices contained in the unique infinite component $K_\infty$ of $H_\infty$. In other words, the probability that a typical vertex is contained in $K_\infty$, but not in connected component of the oldest vertex in $G^b_n$ tends to 0.
	
	Let us consider this limit graph and introduce the event $E^*_\infty(R,D,N,s)$ corresponding to $E^*_n(R,D,N,s)$ in $H_\infty$. By definition, $E^*_\infty(R,D,N,s)$ is a local event and thus the claim of the proposition follows from local weak convergence, if for any $\varepsilon>0$ there exist sufficiently large values $R = R(s),D = D(s)$ such that
	\begin{equation*}
	\P^\infty_{(o,U)}\big(\{(o,U) \in K_\infty\} \setminus E^*_\infty(R,D,N,s)\big)<\varepsilon.
	\end{equation*}
	
	Since we assume $(o,U) \in K_ \infty$, we already know that there exist (shortest) paths connecting $(o,U)$ to at least $N$ black vertices born before time $s$, by increasing $R$ and $D$ further still and using that $H_\infty$ is locally finite, we can reduce the probability that these finitely many paths are longer than $D$ or not contained in $(-R,R)^d$ to some arbitrarily small value $\varepsilon$.
	
\end{proof}

\subsection{Proof of Propositions \ref{prop:layersB} and \ref{prop:layersC}}\label{sec:layersB}
In what follows, we again apply Lemma \ref{lem:monotonicity} to embed the {red graph} $G^r_n$ into $G_n$. We show that high-degree nodes are connected in two steps. Firstly, Lemma \ref{lem:2conn1} states that two moderately old red vertices of high degree are likely to both connect to a {young red} vertex. Hence, they are at graph distance at most $2$ from each other in $G^r_n$, as long as they are sufficiently close in $\T_n$. Secondly, by Lemma \ref{lem:gooddense}, red high-degree vertices are well spread out such that a red high-degree vertex has a red vertex with a much higher degree not too far away in $\T_n$. This is reminiscent of the robustness proof in \cite{jacMor2}. Nevertheless, due to the different nature of our goal, we conduct a more refined analysis. 

For vertices $(x,s), (y,t) \in X^r$ set
\[
	\Psi^r(x,s) = Z_x^r(\nf12) s^{(\beta - \alpha \gamma)\delta},
\]
where $Z^r_{\cdot}(\cdot)$ denotes in-degree evolutions in $G^r_n$. 
The following lemma is a variant of \cite[Lemma 11]{jacMor2} and follows from the more general Lemma \ref{APP1} in the appendix. Here, we say $x$ and $y$ are \emph{$2$-connected} if $x \leftarrow z \rightarrow y \text{ in $G^r_n$ for some $(z,r) \in X^r\res_{\T_n \times [\nf12, 1]}$}$.
\begin{lemma}[$2$-connections]\label{lem:2conn1}
	There exists a constant $c>0$ such that for every sufficiently large $n$ and every locally $r$-good $(x, s), (y,t) \in X^r$ with $s, t \le 1/4$, $Z_x^r(\nf12)^\alpha \le Z_y^r(\nf12)$ and $|x - y|^d \le s^{-\beta}$ we have
\[
	\P\big(\text{$x$ and $y$ are \emph{$2$-connected}} \,\big|\, X^r \cap (\T_n \times [0, 1/2]) \big) \ge 1 - \textup e^{-c r \Psi^r(x, s)},
\]
\end{lemma}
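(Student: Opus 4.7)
The plan is to condition on the red configuration up to time $1/2$ and then count, as a conditional Poisson random variable, the young red vertices that 2-connect $x$ and $y$ in $G_n^r$. Let $\mc F := \sigma(X^r \cap (\T_n \times [0, 1/2]))$, so that $x, y$ and the red in-degrees $Z_x := Z_x^r(\tfrac12)$, $Z_y := Z_y^r(\tfrac12)$ are $\mc F$-measurable. Conditional on $\mc F$, the young red vertices $X^r \cap (\T_n \times (\tfrac12, 1])$ form a Poisson point process of intensity $r$. For each such point $(u, q)$, the event that it connects to $x$ and to $y$ in $G_n^r$ is the intersection of two Bernoulli events driven by independent uniform marks; since $f(k) \ge \gamma k$, $q \le 1$, $\varphi$ is non-increasing, and $Z_x^r(q-) \ge Z_x$, both success probabilities are bounded below by the $\mc F$-measurable quantities $\varphi(|u-x|^d/(\gamma Z_x))$ and $\varphi(|u-y|^d/(\gamma Z_y))$. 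Poisson thinning then yields that the number of 2-connecting young red vertices stochastically dominates a Poisson random variable with mean
\[
\mu := \tfrac{r}{2}\int_{\T_n} \varphi\!\Big(\tfrac{|u-x|^d}{\gamma Z_x}\Big)\,\varphi\!\Big(\tfrac{|u-y|^d}{\gamma Z_y}\Big)\, du,
\]
so the conditional 2-connection probability is at least $1 - e^{-\mu}$.

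The core step is to show $\mu \ge c\, r\, \Psi^r(x, s)$ uniformly in the admissible parameter range. I would restrict the integration to the ball $B := \{u \in \T_n : |u - x|^d \le \gamma \kappa^{1/\delta} Z_x\}$, which has volume $\asymp Z_x$ and on which the first profile factor equals $1$. For $u \in B$, the triangle inequality and $|x-y|^d \le s^{-\beta}$ give $|u-y|^d \le C_d (Z_x + s^{-\beta})$, and the standard in-degree upper bound $Z_x = O(s^{-\gamma})$ together with $\gamma < \beta$ imply $Z_x + s^{-\beta} \asymp s^{-\beta}$ in the regime of interest. Applying $\varphi(w) \ge \kappa w^{-\delta} \wedge 1$ and $Z_y \ge Z_x^\alpha$ then gives
\[
\varphi\!\Big(\tfrac{|u-y|^d}{\gamma Z_y}\Big) \gtrsim Z_y^\delta s^{\beta\delta} \ge Z_x^{\alpha\delta} s^{\beta\delta},
\]
whence $\mu \gtrsim r Z_x^{1 + \alpha\delta} s^{\beta\delta} = r\, \Psi^r(x, s) \cdot (Z_x s^\gamma)^{\alpha\delta}$.

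The main obstacle is absorbing the residual factor $(Z_x s^\gamma)^{\alpha\delta}$. The $r$-goodness of $x$ only supplies the sub-polynomial lower bound $Z_x s^\gamma \ge g(s^{-1})^{-1}$, so a direct execution of the above sketch yields $\mu \ge c r \Psi^r(x, s) g(s^{-1})^{-\alpha\delta}$, which is slightly weaker than the stated form. This is precisely why the lemma is deduced from the more general Lemma~\ref{APP1} in the appendix: the latter is formulated with a flexible notion of goodness that bookkeeps such sub-polynomial losses uniformly in $s$ against the parameters $\alpha, \beta$, so the exponent on the right-hand side remains of the claimed form $cr\Psi^r(x,s)$. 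A subsidiary technical point is the opposite regime $Z_x \ge s^{-\beta}$, in which the triangle estimate instead gives $|u-y|^d \lesssim Z_x$, producing $\mu \gtrsim r Z_x^{1 + (\alpha - 1)\delta}$; together with $Z_x \ge s^{-\beta}$ and $\beta > \alpha$ this comfortably dominates $r \Psi^r(x, s)$. Finally, the monotonicity of $Z_x^r(\cdot)$ and $Z_y^r(\cdot)$ in time is essential for making the lower-bound Bernoulli trials across distinct young red vertices conditionally independent given $\mc F$, which is what legitimises the Poisson count above.
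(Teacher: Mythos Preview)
Your approach is essentially the same as the paper's: restrict the integration to a ball of volume $\asymp Z_x$ around $x$ on which the $x$-profile factor is at least a fixed constant, and lower-bound the $y$-profile factor on that ball via the triangle inequality together with the hypotheses $|x-y|^d \le s^{-\beta}$ and $Z_y \ge Z_x^\alpha$. This is precisely what Lemma~\ref{APP1} does (with $\varphi(1)$ in place of your~$1$), and your Poisson-thinning computation reproduces the proof of that lemma.

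You correctly isolate the residual factor $(Z_x s^\gamma)^{\alpha\delta}$, which under $r$-goodness is only $\ge g(s^{-1})^{-\alpha\delta}$ rather than a uniform constant. However, your explanation of how this is absorbed is mistaken: Lemma~\ref{APP1} carries no ``flexible notion of goodness'' --- it is the bare Poisson bound $1-e^{-\lambda Q(x,y)}$, and carrying out the reduction $Q(x,y)\gtrsim \Psi^r(x,s)$ from it produces exactly the same residual factor you found. So deducing Lemma~\ref{lem:2conn1} from Lemma~\ref{APP1} does not sidestep the issue. The point is simply that this sub-polynomial loss is harmless in the only applications (Propositions~\ref{prop:layersB} and~\ref{prop:layersC}): there one immediately passes to $\Psi^r(x_{j-1},t_{j-1}) \ge t_{j-1}^{\beta\delta-\gamma-\alpha\gamma\delta}/g(t_{j-1}^{-1})$, and an extra factor $g^{-\alpha\delta}$ is absorbed into the sub-polynomial $g$ (equivalently, into the small constant~$q$) without affecting the argument.

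A minor point: you invoke a ``standard in-degree upper bound $Z_x=O(s^{-\gamma})$'', but no such deterministic bound is available conditionally on $X^r\cap(\T_n\times[0,1/2])$. Fortunately you do not need it: your case split $Z_x\le s^{-\beta}$ versus $Z_x\ge s^{-\beta}$ already gives $Z_x+s^{-\beta}\le 2s^{-\beta}$ in the first case, and your treatment of the second case is correct.
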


Proceeding as in \cite[Proposition 13]{jacMor2}, we now discover locally good vertices in $X^r$. Since in this section, we always explore $X$ by moving 'towards the right', i.e.~by increasing the first space-coordinate and since we determine local goodness of a vertex $(x,s)$ by peeking into a cube of volume $\nf1s$ around $x$, the following $\sigma$-algebra naturally captures the information collected during the exploration process in $G^r_n$.
\[
	\mc F(x^-) = \sigma\big(X'(x), V\res_{X'(x)}, \text{ with }X'(x) = X^r\res_{[0, x] \times \mathbb R^{d-1} \times [0,1]} \cup X^r\res_{[x - s^{-\nf1d}, x + s^{-\nf1d}]^d \times [0, \nf12]}\big).
\]
We write $\lgood^r_n$ for the family of locally good vertices in $G^r_n$ and also recall from \eqref{abEq} that the connection scheme between the high-degree vertices relies on the two parameters $\alpha,\beta$.

\begin{lemma}[Density of high degrees]\label{lem:gooddense}
	Let $r >0$ be arbitrary. Then, there exists a constant $q\in(0,1)$ with the following property. If $(x,s) \in G^r_n$ is any vertex with $n^{-\nf1\beta}<s \le 1/4$, then
\[
	\P\big(\lgood^r_n \cap (B_{s^{-\beta /d}}(x) \times [0, s^\alpha]) \ne \es\,|\,\mc F(x^-)\big) \ge 1 - q^{\frac17{s^{-\nf{(\beta-\alpha)}d}}}.
\]
\end{lemma}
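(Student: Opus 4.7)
My plan is to exhibit, inside the candidate region $B_{s^{-\beta/d}}(x) \times [0, s^\alpha]$, a family of $M \ge s^{-\nf{(\beta-\alpha)}d}/7$ disjoint sub-cubes on which the presence of a red locally-good vertex constitutes a conditionally independent Bernoulli trial given $\mc F(x^-)$, with success probability bounded below by some constant $p_0 > 0$ depending only on $\gamma, \delta, r$. With $q := 1 - p_0 \in (0,1)$ the required bound then follows from independence. The standing assumption $s > n^{-\nf1\beta}$ ensures that $B_{s^{-\beta/d}}(x)$ is contained in $\T_n$.

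First I would carve out the sub-region that is independent of $\mc F(x^-)$. Recall that $\mc F(x^-)$ reveals $X^r$ on $\{y: y_1 \le x_1\}$ at all times and on the box $[x - s^{-\nf1d}, x + s^{-\nf1d}]^d$ at times $\le \nf12$. The local-goodness test for a candidate $(y,t)$ with $t \le s^\alpha$ inspects only the configuration inside $[y - s^{-\nf\alpha d}, y + s^{-\nf\alpha d}]^d \times [0, \nf12]$, a window of the much larger scale $s^{-\nf\alpha d} \gg s^{-\nf1d}$ (since $\alpha > 1$). Shifting along the positive $x_1$-axis by a distance exceeding $s^{-\nf\alpha d}$ and packing cubes $Q_i$ of side $2 s^{-\nf\alpha d}$ along this axis with separation at least $2 s^{-\nf\alpha d}$, I obtain thickened cubes $\tilde Q_i$ of side $4 s^{-\nf\alpha d}$ that are pairwise disjoint and disjoint from the exploration domain. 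Since the usable interval along the axis inside the ball has length $\sim s^{-\nf\beta d}$ and $\beta > \alpha$, the factor $1/7$ absorbs both the initial offset and the wasted length near the boundary of the ball, giving $M \ge s^{-\nf{(\beta-\alpha)}d}/7$.

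For each $i$, let $A_i$ be the event that $Q_i \times [0, s^\alpha]$ contains at least one locally $r$-good vertex of $X^r$. By construction, $A_i$ is measurable with respect to $X^r \cap (\tilde Q_i \times [0, \nf12])$ together with the edge weights restricted to this set. Since the $\tilde Q_i$ are pairwise disjoint and lie outside the region revealed by $\mc F(x^-)$, the Poisson independence property renders the events $\{A_i\}_{i=1}^M$ mutually independent conditional on $\mc F(x^-)$.

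The main obstacle is the uniform lower bound $\P(A_i \mid \mc F(x^-)) \ge p_0 > 0$. I would attack this in two sub-steps. First, the expected number of red points in $Q_i \times [s^\alpha/2, s^\alpha]$ is $r \cdot 2^{d-1}$, a positive constant, so with probability bounded away from zero there exists a candidate $(y, t)$ in this window. Second, conditional on its presence, the local-goodness condition demands at least $t^{-\gamma}/g(t^{-1})$ red in-neighbours collected from within $[y - t^{-\nf1d}, y + t^{-\nf1d}]^d$; since the surrounding Poisson environment is (by construction) independent of $\mc F(x^-)$ and of the candidate, the standard PA degree estimate from Lemma \ref{lem:oldgood}, applied to the rescaled red sub-configuration, shows that this holds with probability at least $\nf12$. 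A Paley–Zygmund-type argument combines the two estimates to produce the uniform constant $p_0$. Conditional independence then gives
\[
\P\Big(\bigcap_{i=1}^M A_i^c \,\Big|\, \mc F(x^-)\Big) \;\le\; (1 - p_0)^M \;\le\; q^{\,s^{-\nf{(\beta-\alpha)}d}/7},
\]
as desired.
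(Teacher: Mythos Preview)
Your approach is essentially identical to the paper's: pack disjoint inner/outer cube pairs along the positive first-coordinate axis so that the outer cubes avoid the exploration domain, use Poisson independence to obtain conditionally independent Bernoulli trials, and invoke a uniform lower bound on the probability of local goodness in each inner cube.

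Three small slips need fixing. First, the measurability claim for $A_i$ fails as written: a candidate in $Q_i \times [0,s^\alpha]$ with birth time $t \ll s^\alpha$ has local-goodness window of half-side $t^{-1/d} \gg s^{-\alpha/d}$, which protrudes from $\tilde Q_i$. You must restrict to the time slab $(s^\alpha/2,\,s^\alpha)$ already in the definition of $A_i$ (as the paper does), not only in the lower-bound step. Second, with $t \in (s^\alpha/2, s^\alpha)$ the local-goodness window has half-side $t^{-1/d} \le 2^{1/d} s^{-\alpha/d}$, so from the centre of $Q_i$ one may reach out to $s^{-\alpha/d} + 2^{1/d} s^{-\alpha/d} \le 3\, s^{-\alpha/d}$; hence $\tilde Q_i$ needs side $6\, s^{-\alpha/d}$ (the paper's $A_k$), not $4\, s^{-\alpha/d}$, for the independence to hold in all dimensions. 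Third, the uniform lower bound on the local-goodness probability valid for all $s<1/2$ is Corollary~\ref{lem:localgood}; Lemma~\ref{lem:oldgood} only gives a limit as $s\to 0$ and does not yield your claimed ``$\ge 1/2$'' uniformly.
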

\begin{proof}
 For $\eta = \beta - \alpha$ we select $M = \left\lfloor s^{-\nf\eta d}/6 -1\right\rfloor$ disjoint sub-intervals $I_1, \ldots, I_M$ with midpoints $a_1, \ldots, a_M$ within the interval $[s^{-\nf\alpha d}, s^{-\nf\beta d}]$. Now, define $d$-dimensional blocks
\[
A_k = (a_k,0,\dots,0)+[-3s^{-\nf\alpha d},3s^{-\nf\alpha d}]^d,
\]
and \[
B_k = (a_k,0,\dots,0)+[-s^{-\nf\alpha d},s^{-\nf\alpha d}]^d.
\]
The blocks $x+A_k$ are disjoint and their point configuration is independent of $\mc F(x^-)$. Next, note that the total number of points in $(x + B_k) \times (\nf{s^\alpha}2, s^\alpha)$ is Poisson distributed with parameter of constant order proportional to $r$. Moreover, by Corollary \ref{lem:localgood}, the expected number among them that fail to be locally $r$-good is bounded away from $0$. Hence, each of the blocks $x + B_k$ contains a locally good vertex born in $(\nf{s^\alpha}2, s^\alpha)$ with probability at least $1 - q\in(0,1)$.
	
	Now, to check local goodness we need to check in the worst case a set of diameter $2^{\nf1d}s^{-\nf\alpha d} \le 2s^{-\nf\alpha d}$, i.e.~these vertices occur independently for different blocks. Hence, the number of locally good vertices at distance at most $s^{-\nf\beta d}$ from $x$ dominates a multinomial random variable with $M$ trials and success probability $1 - q$, which exceeds $0$ with probability $1 - q^M$.
\end{proof}
We are now in the position to prove the main result of this section, Proposition \ref{prop:layersB}.
Since, $Z^r_y(\nf12) \ge Z^r_x(\nf12)^\alpha$ holds for any $x \in L^r_{k+1}\setminus L^r_k$ and $y \in L^r_k$, by Lemma \ref{lem:2conn1}, with high probability, a vertex in $L^r_{k + 1}$ is $2$-connected to a vertex in $L^r_k$.\\ 
\begin{proof}[{Proof of Proposition \ref{prop:layersB}}]
	Let $Y_0 = (x_0, t_0)$ denote the vertex guaranteed by the event $E^b_n(s)$. 
	We wish to apply first Lemma \ref{lem:gooddense} and then Lemma \ref{lem:2conn1} to find a locally good red vertex $(x_1,t_1)$ with $|x_0 - x_1|^d \le t_0^{-\beta}$ and $t_1 \le t_0^\alpha$ that $2$-connects to $(x_0, t_0)$, thus establishing that $(x_0, t_0)$ and $(x_1,t_1)$ are at distance at most $2$ in $G^r_n$. The probability that this fails is bounded by
\begin{equation}\label{eq:firststep}
	e_1 = q^{{\frac17{t_0^{-\nf{(\beta-\alpha)}d}}}} + \exp(-c r \Psi^r(x_0, t_0)).
\end{equation}
Iteration yields 
\begin{equation*}
	e_j = q^{{\frac17{t_{j-1}^{-\nf{(\beta-\alpha)}d}}}} + \exp(-c r \Psi^r(x_{j-1}, t_{j-1})).
\end{equation*}
	Note that $\Psi(x_{j-1}, t_{j-1}) \ge t_{j-1}^{\beta\delta-\gamma-\alpha\gamma\delta}/g(t_{j-1}^{-1})$ and we recall that 
	\[\nu = \min\{-\beta\delta+\gamma+\alpha\gamma\delta,\nf{(\beta-\alpha)}d\}>0.\]
	Hence, we can find a small number $q>0$ with
\begin{equation*}
	e_j \le 2\exp(-q t_{j-1}^{-\nu}) \le 2\exp(-q t_0^{-\nu \alpha^j}).
\end{equation*}
	The probability of failing to reach $L^r_K$ from $(x_0, t_0)$ in $G^r_n$ is thus bounded by 
	\[2\sum_{j \ge 1} \exp(-q t_0^{-\nu \alpha^j}) \le 2\sum_{j \ge 1} \exp(-q s^{-\nu \alpha^j}),\]
	which can be made arbitrarily small by lowering $s$, see Lemma \ref{lem:vanish}. Note that it takes at most $O(\log\log\log n)$ iterations to arrive at a vertex with birth time $\nf1{(\log n)^C}$ for any $C>0$, since the birth time of the freshly discovered vertex is lower by at least a fixed power than the birth time of the last vertex in each iteration. 
\end{proof}
For Proposition \ref{prop:layersC} we proceed similarly as in Proposition \ref{prop:layersB}.
	\begin{proof}[{Proof of Proposition \ref{prop:layersC}}]
Let a vertex in $(x,s) \in L^r_K$ be given. We need to consider two cases: $s<n^{-\nf1\beta}$ and $s \in(n^{-\nf1\beta},n^{-\nf1{\alpha^k}}).$ In the first case, we argue directly as in the proof of \cite[Proposition 15]{jacMor2} to obtain that $(x,s)$ is either the oldest vertex in $X^r$ or, by Lemma \ref{APP1}, it connects to it with probability exceeding $1 - \e^{\log s^2}$. Note that there are at most $O(n^{1-\nf1\beta})$ such vertices, i.e.~this argument holds with high probability simultaneously for all of them.\\

Let us now consider $s \in(n^{-\nf1\beta},n^{-\nf1{\alpha^k}}).$ An iteration as in the proof of Proposition \ref{prop:layersB} yields a chain of $2$-connections connecting $(x,s)$ to the oldest vertex in at most $K$ steps. Since the error bound is weakest in the first step, we may bound the total probability that the desired path does not exist by \[
	\bar{q}(n) = K\exp({-q {((\log n)^{-\nf\alpha{\nu}}})^\nu }).
\]
Since $\alpha>1$ we have $n \bar q(n) = o(1)$ and thus the total probability that we fail to connect \emph{any} node in $L^r_K$ to the oldest vertex in $K$ steps vanishes, because with high probability there are at most of order $n$ vertices in the system. Thus, the diameter of $L^r_K$ is at most $4 K$ in $G_n^r$.
		
\end{proof}
\begin{remark}
In fact, the diameter result is completely independent of the colouring procedure and the architecture of the connections between the layers remains intact for any percolated version of the graph, which illustrates the robustness of the network. In fact the presence of this hierarchical structure which is build on $o(n)$ nodes, but is still contained in practically all shortest paths is the reason why the giant component remains intact under percolation in the first place. Conversely, it is well known that preferential attachment networks are very susceptible to targeted attacks against the hub-hierarchy \cite{eckhoff2014vulnerability}.
\end{remark}

\section{Proof of Theorem \ref{ldp2Thm}}
\label{ldpSec}

In this section, we prove the LDP asserted in Theorem \ref{ldp2Thm} and deduce Corollary \ref{ldpCor} by applying the LDP for the empirical field of a marked Poisson point process \cite[Theorem 3.1]{georgii2}. For this purpose, we first introduce an approximated network dynamic, where connections appear only up to a finite distance. In a second step, we show that this modified dynamic forms an exponentially good approximation in the sense of \cite[Definition 4.2.14]{dz98}.

%
%
In order to prove Theorem \ref{ldp2Thm}, we rely on the LDP for the empirical field of a marked Poisson point process in the $\tau_{\L}$-topology of local convergence \cite[Theorem 3.1]{georgii2}. However, this result is not directly applicable in the present setting. Indeed, the $\tau_{\L}$-topology captures only interactions of bounded range, whereas the polynomial decay of the profile function $\varphi$ allows for arbitrarily long edges. 

The proof of Theorem \ref{ldp2Thm}, proceeds in two steps. First, in Proposition \ref{ldpApproxProp}, we see that after truncating edges longer than a fixed distance, the resulting neighbourhood evolution is continuous in the input data. In particular, the contraction principle yields an LDP in the truncated setting. Second, Proposition \ref{tvProp} shows that changes induced by the truncations are asymptotically negligible in the sense of exponentially good approximations \cite[Definition 4.2.14]{dz98}. Since the sprinkling construction does not appear in this section, we overwrite the previous notation $G^r$. This approximation has the advantage of exhibiting only local dependencies.

The truncated S-PAM $G^r$ suppresses potential connections longer than a fixed threshold $r > 0$. That is, we consider the dynamics as described in \eqref{eq:connprob}, except that $|x - y|$ is replaced by $\infty$ if $|x - y| > r$.  For $t\le1$ and $\Q \in \Pt$, we write $\Q^{*,r-\neighb}(t)$ for the measure on $\G^*$ determined by the rooted graph $[G^r(t), o]$ under the Palm measure $\Q^*$. Moreover, $\Q^{*,r-\neighb}({t,h})$ denotes the projection of this measure under the map of taking the $h$-neighbourhood.

%
%
\begin{proposition}[LDP for finite-range model]
    \label{ldpApproxProp}
    The approximated neighbourhood evolution $\Q^{*,r-\neighb} \in \mc{M}(\Gs)^{[0, 1]}$ is continuous in $\Q \in \Pt$ under the $\tau_{\L}$-topology.
\end{proposition}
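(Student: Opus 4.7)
The plan is to verify the continuity coordinate by coordinate in the product space $\M(\Gs)^{[0,1]}$. Since the target topology is, by construction, generated by the evaluations $\nu \mapsto \nu_t(g_h)$ indexed by $t \in [0,1]$, $h \in \Zn$ and $g \in \Gs$, and the $\tau_\L$-topology on $\Pt$ is generated by integrals of nonnegative measurable functions of the configuration in a bounded domain, it will suffice to show that for each such triple $(t,h,g)$ the quantity $\Q^{*, r-\neighb}(t)(g_h)$ can be written as the integral against $\Q$ of a nonnegative measurable function of $\psi$ restricted to a bounded set.

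The central ingredient will be a strict locality statement for the truncated dynamics. For any $z \in \R^d$, the in-degree evolution of $z$ in the truncated graph $G^r(\psi)$ depends on $\psi$ only through its restriction to $B_r(z)$ together with the marks carried by the points therein: under the truncated rule, only vertices within Euclidean distance $r$ of $z$ can ever contribute an edge to $z$, and whether a given such vertex $w$ does so is decided only by the mark $V_{z,w}$ and the value of $z$'s truncated in-degree at time $s_w-$, which itself closes up within $B_r(z)$ by induction on birth times. Consequently, the rooted subgraph $[G^r(\psi)(t), o]_h$ is contained in $B_{hr}(o)$ by the edge-length bound, and each of its edges is decided by an in-degree evolution that reads only configurations inside some $B_r(z) \subset B_{(h+1)r}(o)$. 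Hence the indicator
$$F_{t,h,g}(\psi) := \one\{[G^r(\psi)(t), o]_h \simeq g_h\}$$
is a measurable function of $\psi \res_{B_{(h+1)r}(o)}$.

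Plugging this into the Palm disintegration formula stated earlier in the paper gives
$$\Q^{*, r-\neighb}(t)(g_h) = \int_\C \sum_{x \in \psi \cap [0,1]^d} F_{t,h,g}(\theta_x \psi)\,\Q(\d \psi),$$
and the integrand is a nonnegative measurable function depending only on the restriction of $\psi$ to the bounded set $[0,1]^d + B_{(h+1)r}(o)$. It is therefore exactly of the form that generates the $\tau_\L$-topology on $\Pt$, so the evaluation $\Q \mapsto \Q^{*, r-\neighb}(t)(g_h)$ is $\tau_\L$-continuous. Running over all $(t,h,g)$ then yields the desired continuity of $\Q \mapsto \Q^{*, r-\neighb}$ in the product topology on $\M(\Gs)^{[0,1]}$.

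I expect the only real obstacle to lie in making the locality claim precise: one must check that the recursive definition of the truncated in-degree $Z_z^r(\cdot)$ really does close up inside $B_r(z)$ without any hidden dependence on points or marks outside, and that $F_{t,h,g}$ is jointly measurable in its arguments (in particular in $\psi$ and in the auxiliary marks used to realise the Bernoulli decisions). Once this locality is in place, the continuity is a direct transcription of the definitions of the two topologies involved, and no further quantitative estimate is required.
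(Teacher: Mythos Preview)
Your proposal is correct and follows essentially the same approach as the paper's own proof, which is a terse one-liner: fix $(t,h,g)$, note that the indicator $\one\{[G^r(t),o]_h \simeq g_h\}$ is a local observable, and conclude. You have spelled out precisely the content behind the word ``local'' --- the recursive closure of $Z_x^r(\cdot)$ inside $B_r(x)$, the resulting containment of the $h$-neighbourhood in $B_{(h+1)r}(o)$, and the explicit passage from $\Q^*$ back to $\Q$ via the disintegration formula --- and you have correctly flagged the only genuine technical point (that the mark-reading used to realise the $V_{x,y}$ must itself be local and measurable), which the paper leaves implicit.
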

\begin{proof}
	Fix $t \in [0,1]$, $h \ge0$ and $g \in \Gs$. Then, the indicator of the event that $\{[G^r(t), o]_h \simeq g_h\}$ is a local observable. Hence, $\Q^{*,r-\neighb}(t,h)(g_h)$ is continuous in $\Q$ under the $\tau_{\L}$-topology, as asserted.
\end{proof}

Since the empirical field induced by the marked Poisson point process satisfies the LDP in the $\tau_{\L}$-topology with specific entropy as good rate function \cite[Theorem 3.1]{georgii2}, combining Proposition \ref{ldpApproxProp} with the contraction principle implies that $L^{r-\neighb}_n$ satisfies the LDP with good rate function
$$\nu \mapsto \inf_{\substack{\Q \in \Pt \\ \Q^{*,r-\neighb} = \nu}}H(\Q).$$
In order to bridge the gap between $L_n^{r-\neighb}$ and $L_n^{\neighb}$, we rely on the machinery of exponentially good approximation \cite{dz98,eiSchm}.

%
%
\begin{proposition}[Exponentially good approximation]
    \label{tvProp}
Let $t \in [0, 1]$, $h \ge 0$ and $g \in \Gs$. Then, the random variables $L^{r-\neighb}_n(t,h)(g_h)$ are an exponentially good approximation of $L_n(t,h)(g_h)$.
\end{proposition}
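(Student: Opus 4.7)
By the definition of exponentially good approximation \cite[Definition 4.2.14]{dz98}, the goal is to show that for every $\e > 0$,
\[
\lim_{r \to \infty} \limsup_{n \to \infty} \frac{1}{n} \log \P\bigl(|L_n^{\neighb}(t,h)(g_h) - L_n^{r-\neighb}(t,h)(g_h)| > \e\bigr) = -\infty.
\]
My first step is to reduce this to a vertex-counting estimate. Each vertex contributes mass $1/n$ to either empirical measure, and contributes differently only when its $h$-hop neighbourhood is changed by truncating edges of length greater than $r$. Setting
\[
B_n^r = \#\{x \in X :\, [G_n(t), x]_h \not\simeq [G_n^r(t), x]_h\},
\]
the displayed probability is bounded by $\P(B_n^r > \e n)$, so it suffices to prove that $n^{-1} \log \P(B_n^r > \e n) \to -\infty$ as $r \to \infty$.

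Next I would use the combinatorial bound
\[
B_n^r \;\le\; \sum_{y \in X_r^{\ms{long}}} |N_h(G_n(t), y)|,
\]
where $X_r^{\ms{long}}$ collects all vertices of $X$ incident in $G_n(t)$ to at least one edge of Euclidean length greater than $r$ and $N_h(G_n(t), y)$ denotes the $h$-hop neighbourhood of $y$: every bad vertex lies within $h$ hops of an endpoint of some long edge. A Campbell/Mecke computation under stationarity then yields $\E[B_n^r] \le n \psi(r, h)$, where the Palm expectation $\psi(r, h)$ vanishes as $r \to \infty$ by the polynomial decay of $\varphi$ with exponent $\delta > 1$, combined with moment bounds on neighbourhood sizes in the S-PAM derived in~\cite{jacMor1, jacMor2}.

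The final and main step is to upgrade this first-moment control to exponential concentration at speed $n$. I would proceed via a two-tier truncation: fix large $K$ and dominate
\[
B_n^r \;\le\; K \cdot \#X_r^{\ms{long}} \; + \; \sum_{x \in X} |N_h(G_n(t), x)|\, \one\bigl\{|N_h(G_n(t), x)| > K\bigr\}.
\]
The tail sum is controlled uniformly in $r$ using the heavy-tail bounds on $h$-neighbourhood sizes in the S-PAM, delivering $\P(\cdots > \e n /2) \le \exp(-n J(K))$ with $J(K) \to \infty$ as $K \to \infty$. For the leading truncated term, $\#X_r^{\ms{long}}$ is a functional of the marked Poisson process whose mean density vanishes, so applying the process-level Poisson LDP of~\cite[Theorem 3.1]{georgii2} to the closed set of stationary marked configurations that realise a long-edge density above $\e/(2K)$ gives $\P(K \cdot \#X_r^{\ms{long}} > \e n /2) \le \exp(-n I(r, K, \e))$ with $I(r, K, \e) \to \infty$ as $r \to \infty$, since any admissible $\Q \in \Pt$ is then forced to sustain an atypically dense long-edge cloud at diverging specific-entropy cost. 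Choosing first $K$ large and then $r$ large completes the argument.

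The principal obstacle is this concentration step: the event $\{x \in X_r^{\ms{long}}\}$ is a priori not a local observable on the marked Poisson process, because whether an edge is actually drawn at time $t$ depends on the entire evolution of in-degrees under preferential attachment. To invoke the $\tau_{\mc L}$-Poisson LDP, I would dominate this event by a $\tau_{\mc L}$-continuous functional of the Poisson configuration obtained by bounding $f(Z) \le \g n + \g'$ crudely, which suffices because only the polynomial tail of $\varphi$ (and not the exact in-degree) determines how rapidly long edges vanish as $r \to \infty$.
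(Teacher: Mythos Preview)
Your reduction to counting bad vertices is correct, but the concentration step has a genuine gap. The ``crude'' bound $f(Z) \le \gamma n + \gamma'$ that you propose in the last paragraph is fatal: since $\varphi$ is decreasing, this yields the upper bound $\varphi\bigl(t|x-y|^d/(\gamma n + \gamma')\bigr)$ on the connection probability, and for fixed $r$ and $|x-y|=r$ this tends to $\varphi(0)=1$ as $n\to\infty$. So after taking $\limsup_n$ you retain no decay in $r$, and the asserted $I(r,K,\e)\to\infty$ cannot be extracted this way. In other words, the long-edge indicator in the genuine S-PAM is not dominated by any $n$-independent local observable via a global degree bound.

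The paper avoids this by exploiting a feature of the statement that you do not use: the target graph $g_h$ is \emph{fixed}, with $\ell$ vertices. If $[G_n(t),x]_h \simeq g_h$, then every vertex $x'$ in the $(h-1)$-neighbourhood of $x$ has total degree at most $\ell$, hence $f(Z_{x'}(t-))\le f(\ell)$ uniformly. This a~priori degree bound lets one compare to a random connection model with profile $\varphi_*(\rho)=\varphi(\sigma\rho/f(\ell))$ that does not depend on $n$. The paper then discretises space into unit cubes, separates out $m$-dense cubes (controlled by a direct Poisson large-deviation estimate), and on the $m$-sparse part dominates the $h$-neighbourhood exploration by a Galton-Watson tree with offspring $\sum_z N'_z$ whose exponential moments are finite because $\varphi_*$ is integrable. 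Long edges in the RCM are then handled by the tail of this same sum.

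Your alternative decomposition also relies on exponential concentration of $\sum_x |N_h(G_n,x)|\one\{|N_h|>K\}$ at speed $n$. Since $h$-neighbourhood sizes in the S-PAM have power-law tails, such a bound is not available without first localising via the degree cap $\ell$ and the sparse/dense splitting as above; the references you cite do not provide it.
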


Before establishing Proposition \ref{tvProp}, we explain how it enters the proof of Theorem \ref{ldp2Thm}.

%
%
\begin{proof}[Proof of Theorem \ref{ldp2Thm}]
Let $t \in [0,1]$, $h \ge0$ and $g \in \Gs$. First, by \cite[Corollary 1.11]{eiSchm}, it suffices to prove for every $\alpha > 0$ that
    $$\lim_{r \to \infty}\sup_{\substack{\Q \in \Pt\\ H(\Q)\le \alpha}}\big|\Q^{*,r-\neighb}(t,h)(g_h) - \Q^{*,\neighb}(t,h)(g_h)\big| = 0.$$
      Now, for different values of $r$ the approximations $G^r$ are coupled in the sense that for $r' \ge r$ both $\Q^{*,r-\neighb}(t,h)(g_h)$ and $\Q^{*,r'-\neighb}(t,h)(g_h)$ integrate suitable indicators with respect to $\Q^*$. In particular, by the dominated convergence theorem,
    $$\lim_{r \to \infty} \Q^{*,r-\neighb}(t,h)(g_h) = \Q^{*,\neighb}(t,h)(g_h).$$
    Hence, it suffices to show that for $\varepsilon > 0$ there exists $r_0 = r_0(\varepsilon)$ with the following property. If $\Q \in \Pt$ satisfies $H(\Q) \le \alpha$, then 
    \begin{align}
        \label{r0PropEq}
        \sup_{r' \ge r \ge r_0} \big|\Q^{*,r-\neighb}(t,h)(g_h) - \Q^{*,r'-\neighb}(t,h)(g_h)\big| \le \varepsilon.
    \end{align}
    By Proposition \ref{tvProp}, there exists $r_0 > 0$ such that 
    \begin{align}
        \label{expAppEq}
        \limsup_{n \to \infty} \frac1n \log\P\big(\big|L_n^{r-\neighb}(t,h)(g_h) - L_n^{r'-\neighb}(t,h)(g_h)\big| > \varepsilon\big) < -\alpha
    \end{align}
    holds for every $r' \ge r \ge r_0$. As can be deduced from Proposition \ref{ldpApproxProp}, not only $L_n^{r-\neighb}(t,h)(g_h)$ but also the difference $L_n^{r-\neighb}(t,h)(g_h) - L_n^{r'-\neighb}(t,h)(g_h)$ satisfies the LDP and the rate function equals
    $$a \mapsto \inf_{\substack{\Q \in \Pt \\ \Q^{r-\neighb}(t,h)(g_h) - \Q^{r'-\neighb}(t,h)(g_h) = a}}\hspace{-2cm}H(\Q).$$
    In particular, \eqref{expAppEq} gives that
    $$-\hspace{-2cm}\inf_{\substack{\Q \in \Pt \\ |\Q^{r-\neighb}(t,h)(g_h) - \Q^{r'-\neighb}(t,h)(g_h)| > \varepsilon}}\hspace{-2cm}H(\Q) < -\alpha,$$
    so that the asserted upper bound \eqref{r0PropEq} holds for every $\Q \in \Pt$ with $H(\Q) \le \alpha$.
   \end{proof}

\subsection{Proof of Proposition \ref{tvProp}}
\label{expAppSec}

To prove exponentially good approximation, we compare the S-PAM with the Poisson random connection model (RCM) \cite{netCom}. In general, the S-PAM differs from the RCM substantially because preferential attachment leads to high-degree nodes. However, checking whether the $h$-neighbourhood of a given vertex is of a certain form entails a uniform bound on the maximum size of the relevant in-degrees, so that any discrepancy between $G_n$ and $G_n^r$ must come from an edge of length at least $r$ in the RCM. The integrability of the profile function implies that this is a rare event.

Carrying out this program rigorously involves several intermediate steps that we state now and prove later in this section.  First, the proportion of nodes arriving at early times is asymptotically negligible.
\begin{lemma}[Early vertices]
	\label{earlyLem}
	Let $\e>0$. Then,
	$$\limsup_{\sigma \to 0}\limsup_{n \to \infty}\frac1n \log \P(\#(X \cap (\T_n \times [0, \sigma])) > \e n)  = -\infty.$$
\end{lemma}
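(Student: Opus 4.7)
The plan is to reduce the claim to a standard Chernoff bound for a single Poisson random variable. Since $X$ is a homogeneous Poisson point process on $\T_n \times [0,1]$ of intensity $1$ and the volume of $\T_n \times [0,\sigma]$ equals $n\sigma$, the count $N_{n,\sigma} := \#(X \cap (\T_n \times [0,\sigma]))$ is Poisson-distributed with mean $n\sigma$. Hence the problem is purely to control upper tails of a Poisson variable whose mean is of the same order as the deviation threshold, but with a ratio that becomes arbitrarily small as $\sigma \to 0$.

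The key step is the Cramér/Chernoff estimate
\[
\P(N_{n,\sigma} \ge \e n) \le \exp\bigl(-n\,\Lambda^*(\e,\sigma)\bigr), \qquad \Lambda^*(\e,\sigma) := \e\log(\e/\sigma) - \e + \sigma,
\]
obtained by optimising over $\theta>0$ in $\P(N_{n,\sigma} \ge \e n) \le e^{-\theta \e n}\E[e^{\theta N_{n,\sigma}}] = \exp\!\bigl(-\theta\e n + n\sigma(e^\theta-1)\bigr)$ at $\theta = \log(\e/\sigma)$ (valid provided $\sigma < \e$, which may be assumed since we take $\sigma \to 0$). Taking $n^{-1}\log$ and then $\limsup_{n\to\infty}$ yields
\[
\limsup_{n\to\infty} \frac1n \log \P\bigl(N_{n,\sigma} > \e n\bigr) \le -\Lambda^*(\e,\sigma).
\]

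Finally, as $\sigma \to 0$ the rate $\Lambda^*(\e,\sigma) = \e\log(\e/\sigma) - \e + \sigma$ tends to $+\infty$, which gives the claimed double limit equal to $-\infty$. There is no real obstacle here: the whole argument amounts to computing the Poisson Cramér transform, observing that the rate diverges when the mean of the Poisson is much smaller than the deviation threshold, and noting that the scaling $\e n$ against mean $n\sigma$ is exactly of that type for $\sigma$ small.
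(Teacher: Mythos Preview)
Your proof is correct and follows essentially the same approach as the paper: both recognise that the count is a Poisson random variable with mean $n\sigma$ and apply a Chernoff/Poisson concentration bound, obtaining a rate that diverges as $\sigma\to 0$. The only cosmetic difference is that the paper quotes the Poisson concentration inequality from \cite[Lemma~1.2]{penrose} to get the slightly cruder bound $-\tfrac{\e}{2}\log(\e\sigma^{-1})$, whereas you derive the sharp Cram\'er transform $\e\log(\e/\sigma)-\e+\sigma$ directly.
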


%
%
Second, contributions from neighbourhoods around vertices located in highly dense regions of $\T_n$ can also be ignored. To simplify the presentation, we assume in the following that $n' = n^{1/d}$ is an integer. Now, we partition $\T_n$ into cubes $Q_z = z + [-1/2, 1/2]^d$ centred at sites of discrete torus $\Z^d/n'$. Moreover, we let $N_z = \#(X \cap (Q_z \times [0,1]))$ denote the number of vertices in $Q_z$. 
    For a threshold $m\ge1$, we define $z \in \Z^d/n'$ to be \emph{$m$-dense}, in symbols $z \in \Sd$, if $N_z \ge m$ and \emph{$m$-sparse} otherwise. 
\begin{lemma}[High-density regions]
	\label{denseLem}
	Let $\la >0$. Then,
	$$\lim_{m \to \infty}\sup_{n\ge1} \frac1n \log\E\Big[\exp\Big(\la \sum_{z \in \Sd} N_z\Big)\Big] = 0.$$
\end{lemma}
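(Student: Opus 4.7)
The plan is to exploit the product structure arising from the independence of the Poisson point process across the partition cubes $Q_z$, which reduces the supremum over $n$ to a single-site calculation, and then to control the single-site exponential moment using the fact that a Poisson random variable has finite exponential moments of all orders.

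First I would observe that since the Poisson process has intensity $1$ and the cubes $Q_z$ are disjoint unit cubes, the counts $\{N_z\}_{z \in \Z^d/n'}$ are i.i.d.\ Poisson random variables with mean $1$. Setting $N$ to be a generic such variable and noting that $\sum_{z \in D_m} N_z = \sum_z N_z \mathbf 1\{N_z \ge m\}$, independence factorises the exponential moment:
\begin{equation*}
\E\Big[\exp\Big(\la \sum_{z \in D_m} N_z\Big)\Big] = \prod_{z} \E\bigl[\exp(\la N_z \mathbf 1\{N_z \ge m\})\bigr] = \bigl(\E[\exp(\la N \mathbf 1\{N \ge m\})]\bigr)^n.
\end{equation*}
Taking $\tfrac1n \log$ therefore eliminates the dependence on $n$ entirely, and the supremum over $n \ge 1$ equals $\log \E[\exp(\la N \mathbf 1\{N \ge m\})]$.

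It then remains to show that this single-site quantity tends to $0$ as $m \to \infty$. I would write
\begin{equation*}
\E[\exp(\la N \mathbf 1\{N \ge m\})] = 1 + \E\bigl[(e^{\la N} - 1)\mathbf 1\{N \ge m\}\bigr],
\end{equation*}
and note that $\E[e^{\la N}] = \exp(e^{\la} - 1) < \infty$ for every $\la > 0$, since $N$ is Poisson. Hence $(e^{\la N} - 1)\mathbf 1\{N \ge m\} \to 0$ almost surely as $m \to \infty$, and the family is dominated by the integrable random variable $e^{\la N}$. Dominated convergence yields $\E[\exp(\la N \mathbf 1\{N \ge m\})] \to 1$, so its logarithm tends to $0$, completing the proof.

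There is no substantial obstacle here: the lemma is essentially a restatement of the fact that Poisson tails are super-exponentially thin compared to any exponential weight $e^{\la k}$. The only point worth double-checking when writing out the argument is the reduction to i.i.d.\ unit-intensity variables on the torus, which uses that $\T_n$ is partitioned exactly into $n$ unit cubes (the standing assumption that $n' = n^{1/d}$ is an integer takes care of this).
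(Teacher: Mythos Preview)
Your proof is correct and follows essentially the same approach as the paper: factorise over the $n$ independent unit cubes to reduce to the single-site quantity $\log\E[\exp(\la N_o \one\{N_o \ge m\})]$, then send $m\to\infty$ using a convergence theorem. The only cosmetic difference is that the paper invokes monotone convergence (the integrand decreases in $m$) where you invoke dominated convergence; both are valid here.
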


Third, we deduce the finiteness of an exponential moment, thereby helping to bound the number of edges emanating from a given vertex. Fix $b, m>0$ and let $\{N'_z\}_{z \in \Z^d}$ be a family of independent random variables, where $N'_z$ follows the distribution of a binomial random variable with $m$ trials and success probability $\varphi(b(|z| - \sqrt d)_+)$.
\begin{lemma}[Exponential moment]
	\label{finExpmomLem}
	Let $b, m, \la > 0$. Then,
	$$\E\Big[\exp\Big(\la \sum_{z \in \Z^d} N'_z\Big)\Big] <\infty.$$
\end{lemma}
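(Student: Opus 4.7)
The plan is to exploit independence of the family $\{N'_z\}_{z \in \Zd}$ to factor the moment generating function, reducing finiteness to a summability statement for the success probabilities. Writing $p_z := \varphi(b(|z|-\sqrt d)_+)$ and using that each $N'_z$ is binomial with $m$ trials and parameter $p_z$, I would first record
\[
\E\Big[\exp\Big(\la \sum_{z \in \Zd} N'_z\Big)\Big] = \prod_{z \in \Zd}\bigl(1 + p_z(e^\la - 1)\bigr)^m.
\]
Taking logarithms and applying $\log(1+u) \le u$ for $u \ge 0$ yields
\[
\log \E\Big[\exp\Big(\la \sum_{z\in\Zd} N'_z\Big)\Big] \le m(e^\la - 1)\sum_{z \in \Zd} p_z,
\]
so the claim reduces to the finiteness of $\sum_{z\in\Zd} p_z$.

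To establish this summability, I would split off the finitely many sites $z$ in a bounded neighbourhood of the origin, where $p_z$ may saturate at $1$; their total contribution is trivially finite. On the complementary tail the profile takes its explicit power-law form $\kappa x^{-\de}$, and there I would compare the sum to the integral $\int_{\R^d} \varphi(b(|x|-\sqrt d)_+)\,\d x$ by using monotonicity of $\varphi$ on translates of the unit cube $[-1/2,1/2]^d$. A polar-coordinate change of variables reduces this to a one-dimensional power integral, whose convergence ultimately follows from the standing decay assumption on $\varphi$ together with its normalisation $\int_0^\infty \varphi(u)\,\d u = 1/2$.

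I do not anticipate a substantive obstacle: the argument is essentially a standard moment-generating-function calculation for sums of independent binomials, combined with the known integrability of the profile. The only piece of bookkeeping is to correctly separate the saturation region of $\varphi$ from its polynomial-decay region so that the Riemann-sum comparison between $\sum_z$ and $\int \d x$ is clean, but this is mechanical.
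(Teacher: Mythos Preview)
Your proposal is correct and follows essentially the same approach as the paper: factor the moment generating function by independence, take logarithms, bound $\log(1+u)\le u$, and reduce to summability of $\sum_z \varphi(b(|z|-\sqrt d)_+)$, which the paper dispatches in one line by noting that $\varphi$ is integrable and decreasing. Your write-up is simply more explicit about the Riemann-sum comparison than the paper's terse final sentence.
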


Finally, we bound the number of long edges ending in an $m$-sparse cube. For $x \in X$ we define $z(x) \in \Z^d/n'$ to be such that $x \in Q_{z(x)}$. In words, $z(x)$ is the centre of the cube containing $x$.
\begin{lemma}[Long edges]
	\label{longEdgeLem}
	Let $b, m, \la > 0$. Then,
	$$\lim_{r \to \infty}\sup_{n\ge1} \frac1n \log\E\Big[\exp\Big(\la\hspace{-0.1cm} \sum_{\substack{x,y \in X \\ |x - y| > r}}\one\{z(y) \not \in \Sd \text{ and }  V_{x,y} \le \varphi(b|x-y|)\}\Big)\Big] = 0.$$
\end{lemma}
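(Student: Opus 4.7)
The plan is to condition on the point configuration $X$, use that the Bernoulli marks $V_{x,y}$ are independent across pairs to bound the conditional exponential moment, and then control the resulting deterministic sum via the cube partition $\{Q_z\}_{z\in\Z^d/n'}$. The sparsity constraint $z(y)\notin\Sd$ provides the crucial uniform bound $\#(Q_z\cap X)\le m$ on every cube entering the sum, after which the Laplace functional of the Poisson process handles the rest.

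Denote the random sum inside the exponential by $S_r$. Conditional on $X$, the events $\{V_{x,y}\le\varphi(b|x-y|)\}$ are independent Bernoulli, so the elementary bound $1+t\le e^t$ applied termwise gives
\[
\E[\exp(\la S_r)\mid X]\le \exp\big((e^\la-1)T_r\big),\qquad T_r:=\hspace{-0.5cm}\sum_{\substack{x,y\in X\\ |x-y|>r,\,z(y)\notin\Sd}}\hspace{-0.5cm}\varphi(b|x-y|).
\]
For $y\in Q_z$ and $x\in Q_{z'}$, the monotonicity of $\varphi$ together with $|x-y|\ge(|z-z'|-\sqrt d)_+$ gives $\varphi(b|x-y|)\le\varphi(b(|z-z'|-\sqrt d)_+)$, while $|x-y|>r$ forces $|z-z'|>r-\sqrt d$. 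Using also $N_z\le m$ for $z\notin\Sd$,
\[
T_r\le m\sum_{z\in\Z^d/n'}\sum_{z'\in\Z^d/n'}N_{z'}\,\psi_r(z-z')\le m\,\Psi_r\cdot|X|,
\]
where $\psi_r(u):=\varphi(b(|u|-\sqrt d)_+)\one\{|u|>r-\sqrt d\}$ and $\Psi_r:=\sum_{u\in\Z^d}\psi_r(u)$ is finite uniformly in $n$ and satisfies $\Psi_r\to 0$ as $r\to\infty$ by the integrability of $\varphi(b|\cdot|)$ on $\R^d$ (the same integrability that underlies Lemma~\ref{finExpmomLem}).

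Since $|X|$ is Poisson with mean $n$, the Laplace functional now gives
\[
\E[\exp(\la S_r)]\le\E[\exp((e^\la-1)m\Psi_r\,|X|)]=\exp\big(n(\exp((e^\la-1)m\Psi_r)-1)\big),
\]
so that $\sup_{n\ge1}\tfrac1n\log\E[e^{\la S_r}]\le\exp((e^\la-1)m\Psi_r)-1$, which tends to $0$ as $r\to\infty$. The matching lower bound is trivial since $S_r\ge 0$. The main point of care is the middle step: one has to keep the sparsity restriction $z(y)\notin\Sd$ in force until after replacing $N_z$ by $m$ on each sparse cube, and only then relax the constraint and sum freely over $z$. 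Once that bookkeeping is done, the exponential moment collapses to a textbook Laplace calculation for a Poisson random variable and the decay in $r$ follows directly from the tail integrability of $\varphi$.
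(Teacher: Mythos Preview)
Your proof is correct and follows essentially the same route as the paper's: condition on $X$, exploit the conditional independence of the Bernoulli edge indicators, pass to the cube partition with the sparsity bound $N_z\le m$, and finish with the Laplace transform of the Poisson variable $|X|$ together with the tail integrability of $\varphi$. The only cosmetic difference is that the paper phrases the second step as a stochastic domination by independent binomials $N_{x,z'}$ and then invokes the compound Poisson formula, whereas you apply $1+p(e^\lambda-1)\le e^{p(e^\lambda-1)}$ termwise to obtain the deterministic bound $T_r\le m\Psi_r|X|$ directly; the resulting estimates coincide up to the inconsequential choice of cutoff $r-\sqrt d$ versus $r/2$.
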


Before establishing Lemmas \ref{earlyLem}--\ref{longEdgeLem}, we show how they enter the proof of the main result.

\begin{proof}[Proof of Proposition \ref{tvProp}]
	Without loss of generality, set $t=1$.  We derive a bound for the number of \emph{bad vertices $x \in X$}, i.e.~vertices whose $h$-neighbourhood is isomorphic to $g_h$ in $G_n$ but not in $G_n^r$. The corresponding bound with interchanged roles of $G_n$ and $G_n^r$ follows from similar arguments. Hence, for any such $x$ there exists a vertex $x'$ in the $(h-1)$-neighbourhood of $x$ in $G_n^r$ and a vertex $y'$ with $|y' - x'| > r$ such that $y'$ connects to $x'$ in $G_n$ but not in $G_n^r$. By Lemma \ref{earlyLem}, we may restrict our attention to vertices born after time $\sigma$. 
	In particular, writing $\ell$ for the number of vertices in $g_h$, we see that $y'$ is connected to $x$ in the RCM $\Grak$ with vertex set $X$ and where there is an edge from $(x_1, s_1)$ to $(x_2, s_2)$ if and only if $V_{x_1,x_2} \le \varphi_*(|x_1 - x_2|)$, where $\varphi_*(\rho) = \varphi(\sigma\rho/f(\ell))$.

	To obtain bounds on exponential moments, we aim to restrict our attention to neighbourhoods intersecting only $m$-sparse cubes. More precisely, a self-avoiding path $\pi = ((x_0, s_0),  \ldots, (x_j, s_j))$ in $\Grak$ is \emph{$m$-sparse } if 
	\begin{enumerate}
		\item $|x_i - x_{i+1}| \le r$ and $s_{i+1} \le s_i$ for every $i\ge0$,
		\item $z(x_i) \not \in \Sd$ for every $i\ge1$.
	\end{enumerate}
	It is on purpose, that we do not impose $z(x_0) \not \in \Sd$.  With this definition, every bad vertex is contained in an $m$-sparse connected path that either starts at a vertex contained in an $m$-dense cube or features an in-going edge of length at least $r$ in the RCM $\Grak$. Writing $\C_1$ and $\C_2$ for the families of vertices contained in these types of paths, it suffices to provide upper bounds for $\#\C_1$ and $\#\C_2$. More precisely, by the exponential Markov inequality it suffices to show that for every $\la>0$,
	\begin{align}
		\label{mApproxEq}
		\limsup_{m \to \infty}\limsup_{n \to \infty} \frac1n \log\E[\exp(\la \#\C_1)] = 0,
	\end{align}
	and
	\begin{align}
		\label{rApproxEq}
		\limsup_{r \to \infty}\limsup_{n \to \infty} \frac1n \log\E[\exp(\la \#\C_2)] = 0.
	\end{align}

	To begin with, we show \eqref{mApproxEq} and introduce the \emph{$m$-sparse connected component $C_x$ at $x \in X$} as the union of all vertices of $m$-sparse paths in $\Grak$ starting at $x \in X$ and consisting of at most $\ell$ hops. First, recalling the definition of $N'_z$ from the paragraph preceding Lemma \ref{finExpmomLem}, the number of edges in $\Grak$ from any $x' \in X$ to a vertex in $z \not \in \Sd$ is stochastically dominated by $N'_{z(x') - z}$.  In particular, the independence of the family $\{V_{x,y}\}_{x,y \in X}$ implies that conditioned on $X$, the size of $C_x$ is stochastically dominated by the offspring until generation $\ell$ of a Galton-Watson process with offspring distribution $N' = \sum_{z \in \Z^d}N'_z$.  Despite the independence assumption on the collection $\{V_{x, y}\}_{x, y \in X}$, the $m$-sparse connected components $C_x$ at different points $x \in X$ are not independent because they can share common vertices. Nevertheless, the size of their union is stochastically dominated by the sum of the component sizes, when each component is explored independently, see \cite[Lemma 2.3]{DePi96}. Hence, noting that Lemma \ref{finExpmomLem} yields the finiteness of the cumulant generating function $c(\la) = \log\E[\exp(\la N')]$, we arrive at 
	\begin{align}
		\label{densemomEq}
		\frac1n \log\E[\exp(\la \#\C_1)] \le \frac1n \log\E[\exp(c^{(\ell)}(\la) \#\{x \in X:\, \text{$z(x) \in \Sd$}\})],
	\end{align}
	$c^{(\ell)}$ denotes the $\ell$-fold iteration of the function $c$. By Lemma \ref{denseLem}, the right-hand side tends to 0 as $m \to \infty$. 

	In order to show \eqref{rApproxEq}, we proceed in precisely the same way until arriving at the analog of \eqref{densemomEq}, where the number of vertices contained in an $m$-dense cube is replaced by the number of vertices that are contained in an $m$-sparse cube and are the endpoint of an edge in $\Grak$ of length at least $r$. Instead of Lemma \ref{denseLem}, now Lemma \ref{longEdgeLem} implies that the resulting expression tends to 0 as $r \to \infty$.
\end{proof}

%
%
Finally, we prove Lemmas \ref{earlyLem}--\ref{longEdgeLem}.

\begin{proof}[Proof of Lemma \ref{earlyLem}]
    For fixed $\sigma > 1$, the number $\#(X \cap (\T_n \times [0, \sigma]))$ of Poisson points born before time $\sigma$ is a Poisson random variable with parameter $n\sigma$. In particular, by the Poisson concentration inequality \cite[Lemma 1.2]{penrose},
	$$\frac1n \log \P(\#(X \cap (\T_n \times [0, \sigma])) > \e n) \le -\frac{\e}{2}\log(\e \sigma^{-1}),$$
    where the right-hand side tends to $-\infty$ as $\sigma \to 0$.
\end{proof}

\begin{proof}[Proof of Lemma \ref{denseLem}]
	First, by the independence property of the Poisson point process,
	$$\frac1n \log\E\Big[\exp\Big(\la \sum_{z \in \Sd} N_z\Big)\Big] = \log\E[\exp(\la \one\{o \in \Sd\} N_o)].$$ 
	Since $N_o$ has all exponential moments, the claim follows from the monotone convergence theorem.
\end{proof}

\begin{proof}[Proof of Lemma \ref{finExpmomLem}]
	By independence of the $\{N'_z\}_{z \in \Z^d}$, we can expand the exponential moment as 
	\begin{align*}
	\log\E\Big[\exp\Big(\sum_{z \in \Z^d}N'_z\Big)\Big] &= m\sum_{z \in \Z^d} \log\big(1 + \varphi(b(|z| - \sqrt d)_+)(e^\la - 1)\big)\\
						    &\le m(e^\la - 1)\sum_{z \in \Z^d} \varphi(b(|z| - \sqrt d)_+).
	\end{align*}
	Now, the infinite series in the last line converges because	$\varphi$ is integrable and decreasing.
\end{proof}

\begin{proof}[Proof of Lemma \ref{longEdgeLem}]
	Since the $\{V_{x,y}\}_{x, y \in X}$ are iid, we see that conditioned on $X$ the events
	$$\{V_{x,y} \le \varphi(b|x-y|)\}$$
	are independent for different values of $x$ and $y$. Hence, the sum in the exponential is stochastically dominated by
	$$\sum_{{x \in X}}\sum_{z':\,|z(x) - z'| > r/2}N_{x, z'},$$
	where the $N_{x, z'}$ are independent binomial random variables with $m$ trials and success probability $\varphi(b(|z(x) - z'| -  \sqrt d))$. In particular, by the formula for the characteristic function of compound Poisson sums,
	\begin{align*}
		\frac1n\log\E\Big[\exp\Big( \la\sum_{{x \in X}}\sum_{{z':\,|x - z'| > r/2}} N_{x, z}\Big)\Big] &= \E\Big[\exp\Big(\la \sum_{{z:\,|z| > r/2}}N_{o, z}\Big)\Big] - 1 \\
													   &= \exp\Big(m\sum_{{z:\,|z| > r/2}}\log(1+\varphi(b(|z| -  \sqrt d)) (e^\la - 1))\Big) - 1\\
													   & \le \exp\Big(m (e^\la - 1) \sum_{{z:\,|z| > r/2}}\varphi(b(|z| -  \sqrt d))\Big) - 1,
	\end{align*}
where the last sum converges by the integrability assumption on the profile function. Sending $r \to \infty$ concludes the proof.
\end{proof}

\subsection{Proof of Corollary \ref{ldpCor}}
Since Theorem \ref{ldpThm} already provides an LDP for fixed times, the proof of Corollary \ref{ldpCor} reduces to verifying exponential tightness in the Skorohod topology.

\begin{proof}[Proof of Corollary \ref{ldpCor}]
    By Corollary \ref{ldpThm}, the rescaled number of nodes $L_n^{\deg; \le k}$ of degree at most $k$  satisfies an LDP in the product topology. In particular, by \cite[Corollary 4.2.6]{dz98}, it suffices to establish exponential tightness in the Skorohod topology. To this end, we use a criterion established in \cite[Theorem 4.1]{FeKu06}:
    \begin{enumerate}
        \item $L_n^{\deg; \le k}(t)$ is exponentially tight for every $t \le 1$, and
        \item $\limsup_{\eta \to 0}\limsup_{n \to \infty}\frac1n \log \P(w_\eta'(L_n^{\deg; \le k}) > \e) = -\infty,$
    \end{enumerate}
    where 
    $$w_\eta'(L_n^{\deg; \le k}) = \inf_{0 = t_0 < \cdots < t_j = 1:\, \min_{ i \le k}|t_i - t_{i-1}| > \eta} \max_{ i \le j} \sup_{s, t \in [t_{i-1}, t_i)}|L_n^{\deg; \le k}(s) - L_n^{\deg; \le k}(t)|$$ 
    denotes the Skorohod-adapted modulus of continuity. 

	Exponential tightness of $L_n^{\deg; \le k}(t) \le n^{-1}\#X$ is a consequence of the exponential tightness of the rescaled Poisson random variable $n^{-1}\#X$.  Therefore, we concentrate on item (2). Here, we proceed along the lines of the proof of Proposition \ref{tvProp}. Fixing an interval $I \subset [0,1]$ of length $\eta$, we distinguish two cases. First, assume that $I \subset [0, 2\sigma]$ for some small $\sigma > 0$. Then, during the time interval $I$, the truncated in-degree of each node can change by at most $k \ge 1$. Therefore, 
    $$\frac1n \log \P(w_\eta'(L_n^{\deg; \le k}) > \e) \le n^{-1} \log \P(k\#(X \cap (\T_n \times [0, \sigma])) > n\e),$$
    which by Lemma \ref{earlyLem} tends to $-\infty$ as $\sigma \to 0$.

    Hence, from now on we may assume that $I \subset [\sigma, 1]$. Then, setting again $\varphi_*(\rho) = \varphi(\sigma\rho/f(\ell))$, we proceed  similarly to Proposition \ref{tvProp} and introduce the quantity
	$$N = \#\big\{x \in X:\, V_{x, y} \le \varphi_*({|x - y|})\text{ and }Z_x(t-) \le k \text{ for some }(y, t) \in X \cap (\T_N \times I)\big\}.$$
    Since the total number of in-degree changes during the time interval $I$ is at most $k N$, it suffices to derive a suitable upper bound for the latter. In particular, $N \le N^{(\ms s)} + \sum_{z \in \Sd} N_z$, where
    $$N^{(\ms s)} = \sum_{\substack{x, y \in X\\ z(x) \not \in \Sd}} \one\big\{(y,t) \in X \cap (\T_N \times I) \text{ and }V_{x, y} \le  \varphi_*({|x - y|})\big\}.$$
    Using the exponential Markov inequality and Lemma \ref{denseLem}, it suffices to show that for any fixed $b>0$ we have that 
    $$ \log \E[\exp(b N^{(\ms s)})] \le 2n$$
    if $\eta$ is chosen sufficiently small. To achieve this goal, we note that as in the proof of Lemma \ref{longEdgeLem} the random variable $N^{(\ms s)}$ is stochastically dominated by
    $$\sum_{{(y,t) \in X \cap (\T_N \times I)}}\sum_{z \in \Z^d/n'}N_{y, z},$$
    where again the $N_{y, z}$ are independent binomial random variables with $m$ trials and success probability $\varphi(b(|z| - \sqrt d)_+)$. Since $X \cap (\T_N \times I)$ is a Poisson point process with intensity $\eta n$, this time we arrive at 
    $$\frac1n \log \E[\exp(b N^{(\ms s)})] \le  \eta \Big(\E\Big[\exp\Big(b \sum_{z \in \Z^d} N'_z\Big)\Big] - 1\Big),$$
    which by Lemma \ref{finExpmomLem} tends to 0 as $\eta \to 0$.
\end{proof}

\section{Extensions and open problems}
\label{conclusion}

\subsection{Distances}
It is straightforward to see, by checking the error terms in the proofs of Propositions \ref{prop:layersB} and  \ref{prop:layersC} that Theorem \ref{distanceThm} remains valid for the slightly more general choices of attachment and profile function studied in \cite{jacMor2}, thus the following extension of our distance result holds.
\begin{corollary}
Let the preferential attachment function $f$ satisfy
\begin{equation*}
\lim_{k \to \infty}\frac{f(k)}k=\gamma\in(0,1)
\end{equation*}
and the profile function $\varphi$
\begin{equation*}
\varphi(x)=L(x)x^{-\delta},
\end{equation*}
for some $\delta\in(1,\infty)$ and some slowly varying function $L:(0,\infty) \to  (0,\infty)$, then the conclusion of Theorem \ref{distanceThm} remains valid.
\end{corollary}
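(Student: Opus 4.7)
The plan is to revisit each step of the proofs of Propositions \ref{prop:layersB} and \ref{prop:layersC} and verify that the two generalizations introduce only sub-polynomial perturbations, which can be absorbed into the slack already present in the choice of parameters $\alpha, \beta$ in \eqref{abEq} and in the sub-polynomially growing function $g$ appearing in the definition of $r$-goodness.

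First, I would handle the attachment function. The hypothesis $f(k)/k \to \gamma$ implies that for every $\eps>0$ and every sufficiently large $k$ one has $(\gamma-\eps)k \le f(k) \le (\gamma+\eps)k$, so the connection probability \eqref{eq:connprob} is controlled by the same quantity as in the affine case, up to a multiplicative factor tending to $1$ as $k \to \infty$. As a consequence, the typical in-degree of a vertex born at time $s$ remains of order $s^{-\gamma}$, and all scales driving the algorithm of Section \ref{sec:layersB} remain unchanged; the small $k$ regime only contributes a uniformly bounded multiplicative constant that is immaterial for the asymptotic bounds.

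Second, the profile $\varphi(x) = L(x) x^{-\delta}$ with $L$ slowly varying can be bracketed via Potter's bounds: for every $\eps>0$ there exists $x_0$ with $x^{-\eps} \le L(x) \le x^{\eps}$ for $x \ge x_0$. I would shrink the open intervals in \eqref{abEq} by a small margin so that the exponent $\nu = \min\{-\beta\delta+\gamma-\alpha\gamma\delta,(\beta-\alpha)/d\}$ remains strictly positive after subtracting $O(\eps)$. Under this cushion, Lemmas \ref{lem:2conn1} and \ref{lem:gooddense} go through verbatim, since the slowly varying factor $L$ merely enlarges the sub-polynomial function $g$ present in the bound $\Psi^r(x_{j-1}, t_{j-1}) \ge t_{j-1}^{\beta\delta-\gamma-\alpha\gamma\delta}/g(t_{j-1}^{-1})$; the relevant quantity still blows up at rate $t_{j-1}^{-\nu + o(1)}$.

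The main obstacle, and the only place where care is genuinely needed, is to verify that the accumulated slowly varying corrections do not destroy the stretched-exponential tails $\exp(-q\, t_0^{-\nu \alpha^j})$ in the error terms $e_j$ of Proposition \ref{prop:layersB}. Since the iteration runs for $j = O(\log\log\log n)$ steps and each iteration contributes a correction that is at most a power of $\log$ of the previous birth time, the accumulated prefactor is bounded by $\exp((\log n)^{\eps})$ for arbitrarily small $\eps$ and is therefore dominated by $\exp(-q\, t_0^{-\nu \alpha^j})$ for the same $t_0$. Summing the geometric series as in Lemma \ref{lem:vanish} closes the proof of Proposition \ref{prop:layersB}; the diameter bound of Proposition \ref{prop:layersC} then follows from exactly the same iteration, because $\bar q(n) = K\exp(-q((\log n)^{-\alpha/\nu})^\nu)$ still satisfies $n\bar q(n) = o(1)$ once $\nu$ is replaced by $\nu - O(\eps)$. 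The argument for the local reachability statement in Proposition \ref{prop:local} involves no quantitative profile-dependent estimate and transfers without modification.
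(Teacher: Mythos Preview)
Your proposal is correct and follows exactly the approach the paper indicates: the paper does not give a detailed proof but simply notes that ``it is straightforward to see, by checking the error terms in the proofs of Propositions \ref{prop:layersB} and \ref{prop:layersC}, that Theorem \ref{distanceThm} remains valid,'' and you have carried out precisely that verification. Your use of Potter's bounds and absorption of the slowly varying factor into the sub-polynomial slack $g$ and the open intervals in \eqref{abEq} is the natural way to make this rigorous; your ``main obstacle'' paragraph is slightly over-cautious (each error bound $e_j$ is self-contained, so there is no genuine accumulation across iterations), but the conclusion is unaffected.
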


We also conjecture that the upper bound on the distances in the giant component is sharp.
\begin{conjecture}[Lower bound on distances for $\gamma > \nicefrac\delta{(1 + \delta)}$]
Let $Y, Y'$ be chosen uniformly among the vertices of $C_n$. It holds that \[
\textup{dist}_n(Y, Y')=\big(4 + o(1)\big)\frac{\log\log n}{\log \frac {\gamma}{\delta(1-\gamma)}} \text{ with high probability as }n \to \infty.
\]
\end{conjecture}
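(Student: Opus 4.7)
\subsection*{Proof proposal for the conjecture}

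Since the conjectured lower bound matches the upper bound in Theorem~\ref{distanceThm}, the plan is to establish a matching $(4 - o(1))\rho \log\log n$ lower bound, with $\rho$ as in~\eqref{def:prefactor}. The guiding intuition is already built into the proof of Theorem~\ref{distanceThm}: the hierarchical structure $L^r_1 \subset L^r_2 \subset \cdots$ of increasingly old high-degree vertices is essentially the \emph{only} efficient way to traverse the graph, so any short path between two typical vertices $Y,Y'$ must climb up this hierarchy from $Y$, pass through (or near) the top layer, and climb back down to $Y'$, contributing the two factors of $2\rho\log\log n$.

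\textbf{Step 1: control the degree of a typical endpoint.} A vertex uniformly chosen from $C_n$ has a birth time whose distribution is absolutely continuous with respect to the Lebesgue measure on $[0,1]$, so with high probability its birth time is bounded away from $0$. Standard degree tail estimates for the S-PAM (the same ones that yield the power-law exponent in~\cite{jacMor1}) then give that both $Y$ and $Y'$ have in-degree at most $(\log n)^{o(1)}$ with high probability; equivalently, $Y$ and $Y'$ are not themselves members of any layer $L^r_k$ with $k$ of order $\log\log n$.

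\textbf{Step 2: an age-reduction lemma.} The quantitative core of the argument is a statement of the form: for any $\alpha' > \alpha = \gamma/(\delta(1 - \gamma))$ and for $s$ not too small, the probability that a vertex born at time $s$ is $2$-connected in $G_n$ to a vertex born before time $s^{\alpha'}$ is at most $n^{-c}$ for some $c = c(\alpha', \gamma, \delta) > 0$. The heuristic after Theorem~\ref{distanceThm} already computes the expected number of such $2$-paths and shows it is $o(1)$ for $\alpha' > \alpha$; one then promotes expectation to probability via a first-moment bound. For longer paths (3 or more hops that skip the hierarchy), one iterates the same computation, carefully tracking the joint in-degree profile along the path. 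This is where the connection probability~\eqref{eq:connprob} needs to be controlled conditional on the partial history of the PA process; using monotonicity (Lemma~\ref{lem:monotonicity}) and the sprinkling technique of Section~\ref{sec:rob} one can decouple the early and late phases of the construction.

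\textbf{Step 3: BFS exploration from $Y$ and $Y'$.} Combining Steps~1 and~2, perform a breadth-first search from $Y$ and define $S_j$ = infimum of birth times of vertices reached in at most $j$ hops. By induction on $j$, $S_j \ge S_{j-2}^{\alpha + o(1)}$ with probability $1 - n^{-c'}$, and a union bound over the at-most-polynomially-many vertices in the first $j$ BFS layers gives $-\log S_j \le (\log n)^{(j+o(j))/(2\rho)}$ uniformly over $j \le (2\rho - \e)\log\log n$. In particular, the $j$-neighbourhood of $Y$ never reaches a vertex of birth time $\le 1/n$ for such $j$. Symmetrically for $Y'$.

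\textbf{Step 4: the two explorations do not meet early.} The hardest part (and the main obstacle) is ruling out that the two BFS trees from $Y$ and $Y'$ meet at some intermediate vertex whose birth time is much larger than $1/n$, thereby forming a shortcut that avoids the top of the hierarchy. One has to show that any vertex at BFS depth $< (2\rho-\e)\log\log n$ from $Y$ lies in a spatial region of volume at most $n^{1-c''}$, so that by independence a typical $Y'$ lies outside this region and the BFS balls are disjoint with probability $1 - o(1)$. Controlling the spatial extent of the BFS balls requires a spatial counterpart to the age-reduction lemma, bounding the displacement per hop by a power of the current age; this is delicate because old high-degree vertices can connect over long distances, but the condition $\gamma > \delta/(1+\delta)$ is precisely what keeps these long edges under control quantitatively. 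Once the two balls are shown to be disjoint with high probability, the distance $\mathsf{dist}_n(Y,Y')$ exceeds $2(2\rho - \e)\log\log n$ on this event, completing the proof.
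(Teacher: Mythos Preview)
The statement you are addressing is a \emph{conjecture}; the paper does not prove it. Immediately after stating it, the authors write: ``The presence of positive correlations between the edges induced by the geometry of the model prevents the use of a simple first moment method to show that shorter paths do not exist. \ldots\ We have, however, so far not been able to match the calculation of the upper bounds presented above. It is an interesting open problem to complement our result by the lower bound.'' So there is no paper proof to compare with, and your proposal must be judged on its own merits.

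The overall architecture you sketch (track the minimum birth time reached in a BFS from each endpoint, show that it decreases by at most a factor $\alpha+o(1)$ per two hops, and argue that the two BFS balls do not meet early) is the natural one and is indeed what one would expect a proof to look like. The genuine gap is in Step~2. Your age-reduction lemma is derived by ``promot[ing] expectation to probability via a first-moment bound'', but this is precisely the method the authors flag as blocked by the positive edge correlations inherent in the PA mechanism: once a vertex has high in-degree, \emph{many} young vertices become simultaneously more likely to connect to it, so the events ``there is a $2$-path through $z_1$'' and ``there is a $2$-path through $z_2$'' are far from independent, and the first-moment count can grossly overestimate the true probability in the relevant regime. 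Your proposal does not explain how to overcome this.

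Moreover, the tools you invoke to ``decouple'' --- monotonicity (Lemma~\ref{lem:monotonicity}) and sprinkling --- point the wrong way for a distance \emph{lower} bound. Both are devices for \emph{adding} edges to a subgraph while preserving inclusion in $G_n$; they are tailored to showing that connections \emph{exist}, not that short paths are \emph{absent}. To bound distances from below one needs instead to dominate $G_n$ from above by a tractable graph, or to control conditional connection probabilities uniformly over the PA history, neither of which follows from the cited lemmas. Until Step~2 is equipped with a mechanism that genuinely handles the reinforcement-induced correlations (the authors hint that the non-robustness arguments of~\cite{jacMor2} may be adaptable), the proposal remains a plausible outline rather than a proof.
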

The presence of positive correlations between the edges induced by the geometry of the model prevents the use of a simple first moment method to show that shorter paths do not exist. Since the machinery to at least partially overcome this problem has already been developed in \cite{jacMor2} to show non-robustness of the S-PAM in a certain parameter range, we are confident that a matching lower bound can be obtained. We have, however, so far not been able to match the calculation of the upper bounds presented above. It is an interesting open problem to complement our result by the lower bound.\\

A related question is that of distances in the finite components and in the non-robust regime. By comparison to other continuum percolation models one would expect that when the preferential attachment is very weak, e.g.~sub-linear, then the graph distances are not shorter than polynomial in the component size. Of course, it is not even known under which precise conditions the model is sub-critical. The bounds known are proved in \cite{jacMor2} and provide non-robustness: $\gamma<\nicefrac{\delta-1}\delta$ in dimension $1$ and $\gamma<\nicefrac12$ in arbitrary dimension. No explicit parameter values in terms of profile and preferential attachment function are known for which there is a priori no giant component. It is also quite likely that there is a regime featuring a non-robust giant component akin to a supercritical continuum percolation cluster with additional long edges, thereby reducing the length of shortest paths drastically to $(\log n)^a$ for some $a>1$ or even to $\log n$, which would correspond to non-spatial models.

\subsection{Large deviations}
We stress that Corollary \ref{ldpThm} holds under the assumption that $\M(\Zn)$ carries the vague topology, thereby falling short of capturing observables depending on arbitrarily high in-degrees. This resembles the situation in \cite[Theorem 1.4]{choi}, which relies on the countable product space $C([0,1]; \R)^{\Z_{\ge0}}$ representing the evolutions of proportions of nodes with given in-degrees. Since this space carries the product topology, effectively also \cite[Theorem 1.4]{choi} is not adapted to dealing with arbitrarily high in-degrees. In \cite{subLinPA}, this constraint does not appear since the dynamics of the PAM in the sub-linear regime differs decisively from the one in the linear regime.

We conjecture that Corollary \ref{ldpThm} continues to hold in the total-variation topology. This would follow from Conjecture \ref{ldpCon} below, which would also open the door to a large deviation analysis of more refined network characteristics such as the clustering coefficient.

\begin{conjecture}
    \label{ldpCon}
    The normalised number of points of in-degree larger than $k$ is an exponentially good approximation of 0. That is, for every $\varepsilon > 0$,
    $$ \limsup_{k  \to  \infty}\limsup_{n  \to  \infty}\frac1n\log\P(\#\{x \in X:\, Z_x(1) \ge k\} > n\varepsilon) = -\infty.$$
\end{conjecture}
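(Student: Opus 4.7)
The plan is to bound $N_k := \#\{x \in X:\, Z_x(1) \ge k\}$ by truncating along the birth-time axis and combining a single-vertex degree-tail estimate with Poisson-type concentration. Fix an exponent $a \in (0,\nicefrac{1}{\gamma})$, set $\sigma_k = k^{-a}$, and decompose $N_k = N_k^{\ms e} + N_k^{\ms \ell}$ according to whether the relevant vertex is born in $[0,\sigma_k]$ or in $(\sigma_k,1]$. The early contribution is trivially dominated by $\#(X \cap (\T_n \times [0,\sigma_k]))$, a Poisson variable of mean $n\sigma_k \to 0$, so the Poisson concentration argument from the proof of Lemma~\ref{earlyLem} immediately gives
\[
    \limsup_{n\to\infty}\frac1n\log\P\big(N_k^{\ms e} > n\varepsilon/2\big) \le -\tfrac{\varepsilon}{4}\log\big(\tfrac{\varepsilon}{2\sigma_k}\big),
\]
whose right-hand side diverges to $-\infty$ as $k\to\infty$.

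The crux is the same bound for $N_k^{\ms \ell}$, which reduces to the single-vertex tail
\[
    \P\big(Z_x(1) \ge k \,\big|\, x \text{ born at time } s\big) \le C_1 \exp\big(-C_2 k s^\gamma\big), \qquad s \in (\sigma_k,1], \; k \ge 1.
\]
Such a bound is classical for the non-spatial linear-reinforcement PA, and I would transfer it to the S-PAM by a pathwise domination: integrating out the spatial variable of an incoming vertex, the instantaneous intensity at which $(x,s)$ accumulates in-edges at time $t$ equals $c_d f(Z_x(t-))/(2t)$ uniformly in $x$, by the normalisation $\int \varphi = \nicefrac{1}{2}$. Hence, after the time change $\tau = \log(t/s)$, the process $Z_x(\cdot)$ is stochastically dominated by a pure-birth Yule process on $[0, \log(1/s)]$ with rates $\frac{c_d\gamma}{2}k+\frac{c_d\gamma'}{2}$, whose explicit negative-binomial law supplies the displayed exponential tail. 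Together with the Mecke formula this already yields $\E[N_k^{\ms \ell}] \lesssim n k^{-\nicefrac{1}{\gamma}}$, recovering the expected power-law profile of the asymptotic degree distribution.

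Passing from this first-moment estimate to a super-exponentially small deviation probability is the main obstacle, because the indicators $\{Z_x(1) \ge k\}$ across different vertices are not independent: preferential reinforcement couples the in-degree trajectories of vertices that share competing incoming edges. I would attempt to decouple them by an upper Markov coupling against an independent family of Yule trees, valid up to the stopping time $\tau$ at which some in-degree reaches $k^2$. Outside $\{\tau < 1\}$, whose probability is super-exponentially small by the single-vertex estimate applied with $k^2$ in place of $k$, the truncated $N_k^{\ms \ell}$ is stochastically dominated by a sum of independent Bernoulli indicators with total mean $\lesssim n k^{-\nicefrac{1}{\gamma}}$, and a standard Chernoff bound produces the rate $-\tfrac{\varepsilon}{\gamma}(\log k)\,n$, which tends to $-\infty$ uniformly in $n$ as $k\to\infty$. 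The delicate point, reminiscent of the supermartingale bookkeeping used in Propositions~\ref{prop:layersB} and~\ref{prop:layersC}, is to engineer the coupling so that the exponential rate $C_2$ above is not eroded by more than a constant factor when passing from the dependent to the independent setting; it is this coupling step that, to the best of my knowledge, has not yet been carried through and that keeps Conjecture~\ref{ldpCon} open.
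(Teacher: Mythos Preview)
The statement you are asked to prove is labelled \emph{Conjecture} in the paper and is explicitly left open: there is no proof in the paper to compare your attempt against. Your proposal is therefore not an alternative to a known argument but a sketch of how one might attack an open problem, and you already acknowledge this in your final paragraph.

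Your outline is structurally sound. The birth-time split together with Lemma~\ref{earlyLem} dispatches the early-vertex contribution, and the spatial integration that reduces the in-degree evolution of a single late vertex to a pure-birth process with rate proportional to $f(Z_x(t-))/t$ is correct (this is the mechanism underlying Lemma~\ref{lem:oldgood} and~\cite[Lemmas~23--24]{jacMor2}); the Yule comparison then supplies a single-vertex tail bound and the first-moment estimate $\E[N_k^{\ms \ell}] \lesssim n k^{-1/\gamma}$, consistent with the known power-law degree distribution.

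The genuine gap is exactly where you locate it: passing from the correct first moment to a deviation probability that is $o(e^{-cn})$ for every $c>0$. The preferential-attachment reinforcement creates positive dependence between the in-degree trajectories---a burst of arrivals in a small spatial region can inflate several $Z_x$ simultaneously---so a na\"ive Chernoff bound is unavailable. Your proposed fix (couple to independent Yule trees up to a stopping time, control the stopped event separately) is a plausible line of attack, but making it rigorous requires that the coupling dominate the \emph{joint} law of the indicators $\{Z_x(1)\ge k\}_x$, not merely each marginal; nothing in your sketch supplies this, and you say as much. In short, your proposal accurately reconstructs the heuristic behind the conjecture and correctly isolates the obstacle, but it does not resolve it---which is consistent with the paper's own assessment that the statement remains open.
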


\appendix    
\section{Appendix}
\label{appSec}
Here, we provide statements from the literature and auxiliary calculations used in the main text. We start by a refined version of Lemma \ref{lem:2conn1}, where we write $\kappa_d = |B_1(o)|$ for the volume of the unit ball in $\R^d$.
\begin{lemma}\label{APP1}
	Denote by $G^\lambda_n$ the S-PAM built from a homogeneous Poisson point process $X^\lambda$ with intensity $\lambda > 0$. Let $(x,s), (y,t) \in X^\lambda$ be two vertices satisfying $s,t \le \nicefrac12$. Moreover, let $Z_x(\nicefrac12)$ and $Z_y(\nicefrac12)$ denote their respective in-degrees in $G^\lambda_n$ at time $\nicefrac12$ and define \[k(x,y)=f\big(Z_x(\nicefrac12)\big)\varphi\Big(\frac{\big(f\big(Z_x(\nicefrac12)\big)^{\nicefrac1d}+ |x - y|\big)^d}{f\big(Z_y(\nicefrac12)\big)}\Big),\] and \[
	Q(x,y)=\frac{\varphi(1)\kappa_d}2 \big(k(x,y)\vee k(y,x)\big).
\]
	Then, conditional on $X^\lambda \cap (\T_n \times [0, 1/2])$,	$x$ and $y$ are $2$-connected in $G^\lambda_n$ by using only vertices from $X^\lambda\cap (\T_n \times [1/2, 1])$ with probability exceeding $1-\mathrm e^{-\lambda Q(x,y)}$.
\end{lemma}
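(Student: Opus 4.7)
The plan is to find a bridging vertex $z \in X^\lambda \cap (\T_n \times [1/2,1])$ that connects to both $x$ and $y$ by searching inside an explicit small ball. Concretely, I will focus on the cylinder $B_x := B(x, f(Z_x(1/2))^{1/d}) \times [1/2, 1]$, whose spatial radius is exactly the natural connection range at $x$ for a vertex arriving near time $1$. A symmetric computation centred at $y$ will produce the $k(y,x)$ version of the bound, and since only one of the two searches needs to succeed in order to 2-connect, taking the larger of the two lower bounds will yield the $(k(x,y) \vee k(y,x))$ factor appearing in $Q(x,y)$.

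To make the count insensitive to the dynamic evolution after time $1/2$, I will work with the auxiliary marks $V_{\cdot, \cdot}$, under which a new vertex $z$ at time $t_z$ attaches to a prior vertex $w$ iff $V_{w,z} \le \varphi(t_z |w-z|^d / f(Z_w(t_z-)))$. Monotonicity gives $Z_w(t_z-) \ge Z_w(1/2)$; combined with $\varphi$ decreasing, $f$ increasing, and $t_z \le 1$, the dynamic connection event is then \emph{implied} by the static threshold event $\{V_{w,z} \le \varphi(|w-z|^d / f(Z_w(1/2)))\}$, which depends only on the conditioning data and on an independent uniform mark. For $z$ with spatial component in $B(x, f(Z_x(1/2))^{1/d})$, the inequalities $|x - z|^d \le f(Z_x(1/2))$ and $|y - z|^d \le (|x-y| + f(Z_x(1/2))^{1/d})^d$ give lower bounds of $\varphi(1)$ and $\varphi((|x-y| + f(Z_x(1/2))^{1/d})^d / f(Z_y(1/2)))$ for the two static probabilities.

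With these static bounds in hand, conditional on $X^\lambda \cap (\T_n \times [0, 1/2])$ the Poisson points in $B_x$ equipped with their independent marks $(V_{x, \cdot}, V_{y, \cdot})$ form a Poisson process, and the subset of those $z$ on which both static threshold events occur is a further thinned Poisson process whose mean I bound from below by the product of the spatial volume $\kappa_d f(Z_x(1/2))$, the time length $1/2$, the intensity $\lambda$, and the two static probabilities, i.e.\ exactly $\tfrac12 \lambda \varphi(1) \kappa_d \, k(x,y)$. Hence the probability that no bridging vertex exists in $B_x$ is at most $\exp(-\tfrac12 \lambda \varphi(1) \kappa_d \, k(x,y))$; the symmetric argument over the ball around $y$ gives the analogous bound with $k(y,x)$ in place of $k(x,y)$, and since either search succeeding implies 2-connection, the maximum of the two lower bounds yields $1 - \exp(-\lambda Q(x,y))$. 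The only genuine subtlety is the dynamic randomness of $Z_x(t_z-)$ and $Z_y(t_z-)$, which is handled cleanly by the monotonicity domination in the second paragraph, at the modest price of the $\varphi(1)$ factor coming from restricting $z$ to a ball of the comparatively small radius $f(Z_x(1/2))^{1/d}$.
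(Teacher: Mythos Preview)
Your proposal is correct and follows essentially the same argument as the paper's proof: search for a connector in the ball $B(x,f(Z_x(1/2))^{1/d})\times[1/2,1]$, use monotonicity of the in-degree together with $f$ increasing, $\varphi$ decreasing and $t_z\le 1$ to replace the dynamic connection probabilities by static thresholds, and then apply Poisson thinning to bound the void probability; the symmetric search around $y$ gives the $k(y,x)$ bound and hence the maximum. Your write-up is in fact slightly cleaner than the paper's, which has a minor typographical slip (the ball is written as centred at $y$ rather than $x$) but otherwise proceeds identically.
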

\begin{proof}
	Set $z_x=Z_x(\nicefrac12)$, $z_y=Z_y(\nicefrac12)$ and let $X^\circ$ denote those vertices $(w,r)$ of $X^{\lambda}$ which lie in $B_{f(z_x)^{1/d}}(y) \times [1/2, 1]$ and satisfy $V_{x, w} \le \varphi(1)$ and $V_{y,w} \le \varphi(r |y - w|^d / f(z_y))$. In particular, all $(w,r)\in X^\circ$ are 2-connectors. By the restriction theorem \cite[Theorem 5.2]{poisBook}, $X^\circ$ forms a Poisson point process with intensity 
	$$\int_{B_{f(z_x)^{1/d}}(y)} \frac{\lambda \varphi(1)}2 \varphi(r |y - w|^d / f(z_y))  \d w \ge \frac{\lambda\varphi(1)\kappa_d f(z_x)}2\varphi\Big(\frac{ (f(z_x)^{\nicefrac1d} + |x - y|)^d}{f(z_y)}\Big).$$
	In particular, 
	$$\P(X^\circ = \emptyset) \le \exp(-\tfrac12 {\lambda\varphi(1)\kappa_d } k(x,y)),$$ 
	so that reversing the roles of $x$ and $y$ yields the assertion of the lemma.
\end{proof}

The next statement ensures that old vertices tend to be good. Let $Z^n(s,\cdot)$ denote the generic in-degree evolution of a vertex born at time $s$ in $G_n$, noting that its spatial position has no influence on $Z^n(s,\cdot)$. Assume that $G_n$ is built from a Poisson process of intensity $\lambda>0$
\begin{lemma}[{\cite[Lemma 24]{jacMor2}}]\label{lem:oldgood}
Let $(x,s)\in G_n$ be born at time $s \le \nicefrac12$. There exists a function $g = g_{\lambda}$ decaying faster than any power at $\infty$ such that
\[
	\lim_{s \to 0}\sup_{\substack{n \ge 1\\ n \log n \geq s^{-1}}} \P_{(x, s)}\big(Z^n(s,\nicefrac12) \le s^{-\gamma}/g(s^{-1})\big) \overset{s \to 0}{\longrightarrow} 0.
\]
Consequently, 
\[
	\lim_{s \to 0}\sup_{\substack{n \ge 1\\ n \log n \geq s^{-1}}} \P_{(x, s)}\big((x,s) \text{ is not good}\big) \overset{s \to 0}{\longrightarrow} 0.
\]
\end{lemma}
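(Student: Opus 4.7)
The plan is to analyse $u \mapsto Z^n(s,u)$ as an inhomogeneous pure-birth jump process and to lower-bound it stochastically by a Yule-type process with explicit asymptotics. Conditioning on the position $x$ of the tagged vertex, the Mecke formula ensures that the remainder of $X$ is still a rate-$\lambda$ Poisson point process on $\T_n \times [0,1]$, so $Z^n(s,\cdot)$ has predictable jump intensity
\[
\Lambda(u) = \lambda \int_{\T_n} \varphi\bigl(u|x-y|^d/f(Z^n(s,u-))\bigr)\, \d y.
\]
The change of variables $w = (u/f(Z))^{1/d}(y-x)$ together with the normalisation $\int_0^\infty \varphi = \nicefrac12$ identifies the corresponding Euclidean integral as $\lambda \kappa_d f(Z^n(s,u-))/(2u)$. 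Under the hypothesis $n\log n \ge s^{-1}$, the polynomial tail of $\varphi$ implies that the torus and Euclidean intensities differ only by a negligible correction, provided a crude a priori polynomial upper bound on $Z^n(s,\nicefrac12)$ holds so that the characteristic length scale $(f(Z)/u)^{1/d}$ stays comfortably below $n^{1/d}$ throughout $[s,\nicefrac12]$.

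Once this Euclidean approximation is justified, the inequality $f(k) \ge \gamma k + \gamma'$ implies that $Z^n(s,\cdot)$ stochastically dominates a pure-birth process $\tilde Z$ with intensity $c(\gamma \tilde Z + \gamma')/u$ for an explicit constant $c = c(\lambda, d) > 0$. Under the logarithmic time change $v = \log(u/s)$, $\tilde Z$ becomes a Yule-type birth process with constant coefficients, and the Athreya--Karlin martingale argument yields $e^{-c\gamma v} \tilde Z(v) \to W$ almost surely for a random variable $W > 0$. Consequently, for any function $g$ growing to infinity sub-polynomially,
\[
\P_{(x,s)}\bigl(Z^n(s,\nicefrac12) \le s^{-\gamma}/g(s^{-1})\bigr) \le \P\bigl(W \le 1/g(s^{-1})\bigr) + o(1) \longrightarrow 0
\]
as $s \to 0$, uniformly in $n$ subject to the hypothesis. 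Here the paper's normalisation absorbs the constant so that $c\gamma = \gamma$, and the $o(1)$ term quantifies the torus-versus-Euclidean correction from the first paragraph. The second assertion of the lemma is then immediate from the definition of a good vertex, since $(x,s)$ is good exactly when $s < \nicefrac12$ and $Z^n(s,\nicefrac12) \ge s^{-\gamma}/g(s^{-1})$.

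The main technical obstacle is making the Euclidean-versus-torus comparison rigorous uniformly across the admissible range of $n$. The natural remedy is a two-step bootstrap: first obtain a rough polynomial upper bound of the form $Z^n(s,\nicefrac12) \le s^{-\gamma-\eta}$ holding with high probability (by comparison with an unconstrained pure-birth process that ignores the spatial profile), then condition on this event to rule out the rare scenarios in which $Z^n$ grows large enough to feel the boundary of $\T_n$, at which point the Yule-type lower comparison goes through cleanly.
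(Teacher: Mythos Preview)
The paper does not actually prove this lemma: its ``proof'' is a one-line citation to \cite[p.~1720]{jacMor2} for the base intensity $\lambda=1$, followed by the remark that the general-$\lambda$ case follows because increasing $\lambda$ can only increase degrees (monotonicity) and decreasing $\lambda$ can be absorbed into the choice of the sub-polynomial correction $g_\lambda$. Your Yule-comparison sketch is precisely the kind of argument that sits behind Lemma~24 of \cite{jacMor2}, so you are in effect reconstructing the cited proof rather than matching what the present paper does, which is purely referential. In that sense your approach is correct in spirit and more informative than the paper's treatment.

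One genuine soft spot in your write-up is the sentence ``the paper's normalisation absorbs the constant so that $c\gamma=\gamma$''. The Euclidean jump rate you compute is $\lambda\kappa_d f(Z)/(2u)$, and after the logarithmic time change the dominating birth process grows with exponent $\lambda\kappa_d\gamma/2$; this equals $\gamma$ only when $\lambda\kappa_d=2$, which is exactly the case $d=1$, $\lambda=1$ underlying \cite{jacMor2}. For other values of $\lambda$ you cannot simply assert that the exponent is $\gamma$, and this is precisely why the paper does \emph{not} rerun the Yule analysis for general $\lambda$ but instead invokes monotonicity and the slack in $g_\lambda$. If you want a self-contained argument, you should either verify the normalisation carefully or, as the paper does, first establish the case $\lambda=1$ and then argue separately that raising or lowering the intensity only perturbs the conclusion within the sub-polynomial correction. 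Your derivation of the second assertion from the first is fine and matches the paper's implicit reasoning.
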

\begin{proof}
See \cite[p.~1720]{jacMor2} for the proof with $\lambda=1$. A higher intensity increases the degree of $(x,s)$. That lowering the intensity makes no difference to the sub-polynomial decay can be seen easily, since $g$ may be replaced by any other increasing function of sufficiently slow decay, cf.~the proofs of Lemma 23 and 24,\cite[p.~1720]{jacMor2}. In particular, reducing the intensity of the Poisson process can be compensated for by increasing $g$ by a constant factor. This has no influence on its sub-polynomial decay.
\end{proof}

The following corollary is obtained directly from the proof of {\cite[Lemma 24]{jacMor2}}: for a given birth time $s$, using a scaling property of the degree evolutions, it is actually sufficient to consider connections to $(x,s)$ in an $s^{-1/d}$ environment of $x$. This fact is also used, without explicit mentioning, in the proof of \cite[Proposition 13]{jacMor2}.
\begin{corollary}\label{lem:localgood}
For any $x \in \T_n$ we have
\[
	\inf_{\substack{s < 1/2\\ n\ge 1 }} \P_{(x, s)}((x, s) \text{ is locally good})>0.
\]
\end{corollary}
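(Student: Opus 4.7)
The plan is to split the range $s \in (0,1/2)$ at a threshold $s_0 > 0$, to be chosen small, and handle each regime separately while producing a positive lower bound uniform in $n \ge 1$.

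For $s \in [s_0, 1/2)$, the required degree threshold $s^{-\gamma}/g(s^{-1})$ is bounded above by a constant $K_0 = K_0(s_0)$, so local goodness only asks for $K_0$ connections arriving from within the cube $[x - s^{-1/d}, x + s^{-1/d}]^d$ during $(s, 1/2)$. This cube has volume at least $2^{d+1}$; every candidate point $(y,t)$ born in it connects to $(x,s)$ with probability at least $\varphi((2\sqrt d)^d s_0^{-1}/f(0))$, which is bounded below by a positive constant since $f(0) = \gamma' > 0$ and $\varphi > 0$. A direct Poisson calculation (the cube contains at least $K_0$ Poisson points in $(s, 1/2)$, each realising its edge to $x$) then produces the required uniform positive lower bound.

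For $s < s_0$ I would adapt the proof of Lemma \ref{lem:oldgood} (i.e.~\cite[Lemma 24]{jacMor2}) using the scaling observation flagged in the remark preceding the corollary. In that proof, the lower bound $s^{-\gamma}/g(s^{-1})$ for $Z^n(s, 1/2)$ is obtained by comparing the in-degree evolution of $(x,s)$ with a reference process fed by Poisson arrivals that are effectively supported in a ball of radius of order $s^{-\gamma/d}$ around $x$; since $\gamma < 1$, this radius is strictly smaller than $s^{-1/d}$, so all of the counted connections automatically come from inside the local window $[x - s^{-1/d}, x + s^{-1/d}]^d$. Consequently, the same proof, run verbatim, yields the same probability bound for the \emph{local} in-degree. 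By Lemma \ref{lem:oldgood} this probability tends to $1$ as $s \to 0$ (uniformly in $n$ subject to $n \log n \ge s^{-1}$), and choosing $s_0$ small enough so that it exceeds $1/2$ closes this regime. The residual case $n \log n < s^{-1}$ with $s < s_0$ forces $s$ to stay away from $0$ as $n$ ranges over any finite set, and can therefore be merged into the first regime.

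The main obstacle is to make the scaling observation precise: one needs to verify that within the argument of \cite[Lemma 24]{jacMor2} the only Poisson arrivals whose contribution is actually used to drive $Z_x(t)$ upwards lie in a $t^{-\gamma/d}$-neighbourhood of $x$, and in particular in the $s^{-1/d}$-neighbourhood for $t \ge s$. This amounts to replacing the unrestricted Poisson arrivals of the original proof by their restriction to $[x - s^{-1/d}, x + s^{-1/d}]^d \times (s, 1/2)$ and checking that every stochastic domination and martingale estimate in the proof survives this restriction. An alternative, more hands-on route would be a first-moment estimate on the number of ``far'' connections using Mecke's formula and the bound $\varphi(u) \le \kappa u^{-\delta}$, but this path is more delicate because it requires simultaneous control of the random in-degree evolution $Z_x(t-)$ appearing inside the profile function.
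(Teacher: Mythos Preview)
Your main approach matches the paper's: both rest on the observation that the proof of \cite[Lemma 24]{jacMor2} already only counts connections arriving from within an $s^{-1/d}$-neighbourhood of $x$, so the probability bound obtained there is automatically a bound for the \emph{local} in-degree. The paper's proof is in fact nothing more than this one-sentence remark; your split into $s \ge s_0$ and $s < s_0$ and the explicit Poisson argument for the first regime are additional structure rather than a different route, and the ``main obstacle'' you flag (verifying that the estimates in \cite{jacMor2} survive the restriction to the local window) is precisely the content the paper leaves implicit.

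There is one concrete slip. Your disposal of the residual case $n\log n < s^{-1}$ with $s<s_0$ is incorrect: for any fixed $n$ (or any finite range of $n$), the constraint $n\log n < s^{-1}$ still allows $s$ to be arbitrarily close to $0$, so this case cannot be ``merged into the first regime'', which requires $s\ge s_0$. The paper's brief proof does not address this regime either, and in every application of the corollary within the paper (see Lemma~\ref{lem:gooddense}) the relevant birth times are bounded below by a negative power of $n$, so the residual case never actually arises. If you want to patch it, note that $n\log n < s^{-1}$ forces the local window to wrap around and cover the whole torus, so that local goodness coincides with ordinary goodness; but whether the uniform positive lower bound genuinely extends over this regime is a separate question that neither your argument nor the paper's one-line remark settles.
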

The following short calculation shows that the error in the proof of Proposition \ref{prop:layersB} can be made arbitrarily small.
\begin{lemma}\label{lem:vanish}
For any $q,\varepsilon>0$ and $\alpha>1$ we have 
	\[\lim_{s \to 0} \sum_{k \ge 1} \exp\big(-qs^{-\varepsilon \alpha^k}\big)=0.
\]
\end{lemma}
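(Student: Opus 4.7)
The statement is an elementary convergence lemma, and the plan is to recast it as a dominated-convergence statement on the counting measure. Substituting $u = s^{-\varepsilon}$, the claim becomes $\sum_{k \ge 1} \exp(-q u^{\alpha^k}) \to 0$ as $u \to \infty$. For each fixed $k \ge 1$, since $\alpha^k > 0$ and $u \to \infty$, the $k$-th summand tends to $0$. So the whole task reduces to exhibiting a summable bound that is valid uniformly in $u$ once $u$ is large enough.

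The key observation is that once $s$ is small enough that $s^{-\varepsilon} \ge 2$, i.e.\ $u \ge 2$, one has $u^{\alpha^k} \ge 2^{\alpha^k}$ for every $k \ge 1$, so
\[
\exp(-q\,s^{-\varepsilon \alpha^k}) \;\le\; \exp(-q\,2^{\alpha^k}).
\]
Since $\alpha > 1$, the exponents $\alpha^k$ grow geometrically, so $2^{\alpha^k}$ grows super-exponentially in $k$, and in particular $\exp(-q\,2^{\alpha^k})$ is summable over $k$. This supplies the dominating sequence.

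Applying dominated convergence against counting measure (or, equivalently, splitting the sum into a finite head and a tail: for given $\eta > 0$ choose $K$ with $\sum_{k > K} \exp(-q\,2^{\alpha^k}) < \eta/2$, then make the finitely many remaining terms collectively smaller than $\eta/2$ by taking $s$ sufficiently small) gives the desired limit. I do not anticipate any real obstacle: the lemma is purely a bookkeeping estimate whose only content is that the exponents $s^{-\varepsilon \alpha^k}$ blow up simultaneously as $s \to 0$, fast enough in $k$ to beat any constant.
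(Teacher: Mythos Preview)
Your proof is correct. The substitution $u=s^{-\varepsilon}$ is harmless, the pointwise convergence of each summand is immediate, and the dominating sequence $\exp(-q\,2^{\alpha^k})$ is indeed summable since $\alpha>1$ forces $2^{\alpha^k}$ to grow super-exponentially (in particular, $\alpha^k\ge k$ for all large $k$, so the tail is dominated by $\sum_k \exp(-q\,2^k)$). Dominated convergence, or the equivalent head/tail split you describe, then gives the limit.

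The paper argues a little differently. Writing $r(s)=\exp(-qs^{-\varepsilon})$, it splits the sum at those finitely many $k$ with $\alpha^k\le k$ (each such term tends to $0$) and, for the remaining $k$, compares $r(s^{\alpha^k})$ with $r(s)^k$ to bound the tail by a geometric series $\sum_{k\ge1} r(s)^k = r(s)/(1-r(s))\to 0$. (There is an apparent typo in the displayed bound, which reads $1/r(s)-1$; the intended expression is $r(s)/(1-r(s))$, equivalently $1/(1-r(s))-1$.) So the paper makes the tail vanish \emph{with} $s$ via a geometric series in $r(s)$, whereas you freeze a summable majorant at $u=2$ and let dominated convergence do the work. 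Your route is slightly more transparent and sidesteps the algebra that led to the typo; the paper's route makes the rate at which the tail vanishes explicit in terms of $r(s)$, which is mildly more informative but not needed here.
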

\begin{proof}
	Clearly $r(s)=\exp(-qs^{-\varepsilon}) \to 0$ as $s \to 0$, and

	\[\sum_{k \ge 1}r\big(s^{\alpha^k}\big) \le \sum_{k:\, \alpha^k \le k }r(s^{\alpha^k}) + \frac1{r(s)}-1,\]
which vanishes as $r(s) \to 0.$
\end{proof}

\bibliography{wias}
\bibliographystyle{abbrv}

\end{document}